\newtheorem{theorem}{Theorem}[section]
\newtheorem{lemma}{Lemma}[section]
\newtheorem{proposition}{Proposition}[section]
\theoremstyle{definition}
\newtheorem{definition}{Definition}[section]
\theoremstyle{remark}
\newtheorem{remark}{Remark}[section]
\newcommand{\Tr}{{\rm Tr}}
\newcommand{\tr}{{\rm tr}}
\newcommand{\ricden}{{\bf Ric}}
\newcommand{\ricfun}{\mathcal{R}ic}
\newcommand{\ric}{{\rm Ric}}
\newcommand{\riem}{{\rm Riem}}
\newcommand{\riemop}{ Riem}
\newcommand{\scalar}{R}
\newcommand{\scalarfun}{\mathcal{R}}
\newcommand{\res}{\rm Res}
\newcommand{\pr}{{ Q}}
\newcommand{\A}{A_\theta}
\newcommand{\Ai}{A_\theta^\infty}
\newcommand{\Hcal}{\mathcal{H}}
\newcommand{\End}{{\rm End}}
\newcommand{\lap}{\triangle}
\newcommand{\conn}{\bigtriangledown }
\newcommand{\compcent}[1]{\vcenter{\hbox{$#1\circ$}}}
\newcommand{\comp}{\mathbin{\mathchoice
{\compcent\scriptstyle}{\compcent\scriptstyle}
{\compcent\scriptscriptstyle}{\compcent\scriptscriptstyle}}} 
\newcommand{\extp}{\@ifnextchar^\@extp{\@extp^{\,}}}
\def\@extp^#1{\mathop{\bigwedge\nolimits^{\!#1}}}
\title{The Ricci Curvature in Noncommutative Geometry}
\author[$\dag$]{Remus Floricel}
\author[$\dag$]{Asghar Ghorbanpour}
\author[$\ddag$]{Masoud Khalkhali \thanks{R. Floricel and M. Khalkhali are supported by NSERC Discovery grants. A. Ghorbanpour is partially supported by a PIMS postdoctoral fellowship.}}
\affil[$\dag$]{Department of Mathematics and    Statistics, 
University of Regina}
\affil[$\ddag$]{Department of Mathematics, University of Western Ontario}
\newcommand{\Addresses}{{
  \bigskip
  \footnotesize

  Remus Floricel, \textsc{Department of Mathematics and Statistics,%
University of Regina, 
Regina, SK, Canada, S4S 0A2}\par\nopagebreak
  \textit{E-mail address}: \texttt{Remus.Floricel@uregina.ca}

  \medskip

  Asghar Ghorbanpour,\textsc{Department of Mathematics and Statistics,%
University of Regina, 
Regina, SK,  Canada, S4S 0A2}\par\nopagebreak
  \textit{E-mail address}:\texttt{asghar.ghorbanpour@uregina.ca}

  \medskip

 Masoud Khalkhali, \textsc{Department of Mathematics, University of Western Ontario,
London, Ontario, Canada, N6A 5B7}\par\nopagebreak
  \textit{E-mail address}: \texttt{masoud@uwo.ca}

}}
\date{ }
\begin{document}
\maketitle

\abstract{Motivated by the  local formulae for asymptotic expansion of heat kernels in spectral geometry, we propose a definition of Ricci curvature in noncommutative settings.   The Ricci operator of an oriented closed Riemannian manifold can be realized
as a spectral functional, namely the functional defined by the zeta
function of the full Laplacian of the de Rham complex, localized by smooth endomorphisms  of the cotangent bundle and their trace.
We  use this formulation to introduce the Ricci functional in a  noncommutative setting and in particular for curved noncommutative tori. This Ricci functional uniquely determines a density element, called the Ricci density, which plays the role of the Ricci operator. 
The main result of this paper provides an explicit computation of the Ricci density when the conformally flat geometry of the noncommutative two torus is encoded by the modular de Rham spectral triple.  }
\tableofcontents
\allowdisplaybreaks

\section{Introduction}
 In Connes' program of noncommutative geometry \cite{Connes1994,Connes-Marcolli2008}, the role of geometrical objects is played by spectral triples $(\mathcal{A},\mathcal{H},D)$. 
Similar to the  commutative case and the standard spectral triple $(C^\infty(M),L^2(S),D)$, where $(M,g,S)$ is a closed spin manifold  and $D$ is the Dirac operator acting on the spinor bundle $S$, the spectrum of the Dirac operator $D$ of a spectral triple $(\mathcal{A},\mathcal{H},D)$ encodes the geometrical information of the spectral triple.
However, to gain access to this information, one should first find a spectral formulation of the specific geometric notion, and then extend it to the level of spectral triples.
For instance, the dimension of the manifold is captured by the notion of $p+$ summability, 
 and integration with respect to the Riemannian volume form is captured by the Dixmier trace \cite{Connes1995-reality}.

The notion of scalar curvature for spectral triples has also been formulated in this manner \cite{Connes-Marcolli2008} as we recall now. 
Let $(\mathcal{A,H},D)$ be a spectral triple  of metric dimension $m$ whose (localized) trace of heat kernel has an  asymptotic expansion of the form
\begin{equation}\label{heattraceasymcondition}
\Tr(ae^{-tD^2})\sim\sum_{n=0}^\infty a_n(a,D^2) t^{\frac{n-m}{2}},\quad a\in \mathcal{A},
\end{equation} as $t\to 0^+$.
The scalar curvature is then represented by  the scalar curvature  functional on $\mathcal{A}$, given by
 \begin{equation*}
\scalarfun(a)=a_2(a,D^2).
 \end{equation*}
 This functional can be written in terms of the (localized) spectral zeta function of $D^2$, $\zeta_{D^2}(a,z)=\Tr\big(aD^{-2z}({\rm I}-\pr)\big)$, for $\Re z>m/2$. Here $\pr$ denotes the orthogonal projection on the kernel of  $D^2$, and $a\in \mathcal{A}$. 
Using the Mellin transform, we then have \cite{Connes-Marcolli2008}:
\begin{equation*}
\scalarfun(a)=\left\{
\begin{array}{lc}
\zeta_{D^2}(a,0)-\Tr(aQ), & \mbox{if}\;\,m=2\\ &\\ 
{\res}_{s=\frac{m}{2}-1}\zeta_{D^2}(a,s), &\mbox{if}\;\, m>2.
\end{array}
\right.
\end{equation*} 
This definition is motivated by the classical case $(C^\infty(M),L^2(S),D)$ \cite[Theorem 1.148]{Connes-Marcolli2008},
 where it was shown that
\begin{equation*}
\scalarfun(f)=C_m\int f(x)\scalar(x)dx,\quad f\in C^\infty(M).
\end{equation*} 
Here $R$ is the the scalar curvature of the metric $g$ and $C_m$ is a constant that depends only on the dimension $m$ of the manifold. 

Note that in the commutative  case, the functional $\scalarfun$ determines uniquely the scalar curvature $R$, as the density function of  $\scalarfun$. 
Although the scalar curvature functional was defined in \cite{Connes-Marcolli2008} in the general case of a spectral triple satisfying \eqref{heattraceasymcondition}, an  explicit computation of the scalar curvature $R$ for  the noncommutative torus $A_\theta$ is a  formidable task that required  intriguing  analytic ideas and computer assistance \cite{Connes-Moscovici2014,Fathizadeh-Khalkhali2013}. The quest to prove the Gauss-Bonnet theorem for curved noncommutative tori in the pioneering work of Connes and Tretkoff \cite{Connes-Cohen1992, Connes-Tretkoff2011} played a fundamental role here. As a rule, in all calculations involving the noncommutative tori,   Connes' pseudodifferential calculus \cite{Connes1980} plays a fundamental role.

Unlike the scalar curvature, the Ricci curvature does not appear in the coefficients of the heat trace of the Dirac Laplacian, $D^2$. 
Nevertheless, the Laplacian of the de Rham complex, more precisely the Laplacian on one forms, captures the Ricci operator in its second term.
Exploiting this observation, we formulate the  Ricci operator as a spectral functional on  the algebra of sections of the endomorphism bundle of the cotangent bundle of $M$ (Definition \ref{riccifunctional}), and call it the Ricci functional:
\begin{equation*}
\ricfun(F)=a_2(\tr(F),\lap_0)-a_2(F,\lap_1),\quad F\in C^\infty(\End(T^*M)).
\end{equation*}
An equivalent version of the Ricci functional in terms of the spectral zeta function is then given in \S \ref{zetaformulationofricci}:
\begin{equation*}
\ricfun(F)=\begin{cases}
\zeta(0,\tr(F),\lap_0)-\zeta(0,F,\lap_1)+\Tr(\tr(F)\pr_0)-\Tr(FQ_1), & m=2\\ &\\ 
 \Gamma(\frac{m}{2}-1)\res_{s=\frac{m}{2}-1}\Big(\zeta(s,\tr(F),\lap_0)-\zeta(s,F,\lap_1)\Big), & m>2,
\end{cases}
\end{equation*}where $\pr_j$ is the orthogonal projection on the kernel of $\lap_j$. 

In order to define the Ricci functional for the curved noncommutative two torus, we first construct the analogue of the de Rham complex for the noncommutative two torus. For this purpose, following \cite{Connes-Cohen1992, Connes-Tretkoff2011}, we conformally change the metric by using a noncommutative Weyl factor $e^{-h}$ with $h=h^*\in \Ai$. This procedure gives rise to the modular de Rham spectral triple with dilaton $h$ (Definition \ref{RiccifundeRahm}), which is a modular spectral triple in the sense of \cite{Connes-Moscovici2008}. 
We then  define the Ricci functional for the modular de Rham spectral triple, and show that there exists an element $\ricden\in \Ai\otimes M_2(\mathbb{C})$, called the Ricci density, such that (Lemma \ref{Riccidensitylemma}), 
\begin{equation*}
\ricfun(F)=\frac{1}{\Im(\tau)}\varphi(\tr(F\ricden)e^{-h}), \quad F\in \Ai\otimes M_2(\mathbb{C}).
\end{equation*}
The main result of the paper, obtained in Theorem \ref{maintheorem}, provides a thorough description of the Ricci density:
\begin{equation*}
\ricden= \frac{\Im(\tau)}{4\pi^2}R^\gamma\otimes {\rm I}_2- \frac{1}{4\pi} S(\nabla_1,\nabla_2)\big([\delta_1(\log k),\delta_2(\log k)]\big)e^h\otimes \begin{pmatrix}
i\Im(\tau) & \Im(\tau)^2  \\
 -1& i\Im(\tau)
\end{pmatrix}\,.
\end{equation*}
The term  $R^\gamma\in\Ai$ turns out to be equal to  the graded scalar curvature computed in \cite{Connes-Moscovici2014,Fathizadeh-Khalkhali2013}, $\nabla(a)=-[h,a]$, and the function $S$ is given by   
\begin{equation*}
S(s,t)=\frac{(s+t-t\, \cosh(s)-s\, \cosh(t)-\sinh(s)-\sinh(t)+\sinh(s+t))}{s\, t\left(\sinh\left(\frac{s}{2}\right) \sinh\left(\frac{t}{2}\right) \sinh\left(\frac{s+t}{2}\right)\right)},
\end{equation*}
which coincides with the function $S$ found in \cite{Connes-Moscovici2014,Fathizadeh-Khalkhali2013} for scalar curvature. 
The computations and the proof of the theorem are placed in the last section \S\ref{computation}.

It is an interesting feature of noncommutative geometry that, contrary to the commutative case, the Ricci curvature is not a multiple of the scalar curvature  even in dimension two. This manifests itself in the existence of off diagonal terms 
in the Ricci operator $\ricden $ above.

It is clear that one can define in a similar fashion a Ricci curvature operator for higher dimensional noncommutative tori, as well as for noncommutative toric manifolds. Its computation in these cases poses an interesting   problem. It would also be interesting to find the analogue of the Ricci flow based on our definition of Ricci curvature functional. It should be noted that for noncommutative two tori a  definition of Ricci flow, without a notion of Ricci curvature, is proposed in \cite{Tanvir-Marcolli2012}.

The spectral geometry of a curved  noncommutative two torus has been the subject of intensive studies in recent years. Starting with the pioneering work \cite{Connes-Cohen1992}, 
a Gauss-Bonnet theorem is proven in \cite{Connes-Tretkoff2011} and for general conformal structures in \cite{Fathizadeh-Khalkhali2012}, while  the scalar curvature for conformally flat metrics is computed in \cite{Connes-Moscovici2014,Fathizadeh-Khalkhali2013}.  This scalar curvature and its relation to higher order terms in the heat kernel expanion is further studied in \cite{Connes-Fathizadeh2016}.     A  version of the Riemann-Roch theorem is proven in \cite{Moatadelro-Khalkhali2014} and in general in \cite{Lesch-Moscovici2016}.
 The key idea in all of these works is that 
 the conformal change of metric, first introduced in  \cite{Connes-Cohen1992, Connes-Tretkoff2011}, can be implemented in the noncommutative two torus by introducing a noncommutative Weyl factor. 
The complex geometry of the noncommutative two torus, on the other hand, provides a Dirac operator which, in analogy with the classical case, originates from the Dolbeault complex. 
By perturbing this spectral triple, one can construct a (modular) spectral triple that can be used to study the geometry of the conformally perturbed flat metric on the noncommutative two torus.  
Then, using the pseudodifferrential operator theory for $C^\ast$-dynamical systems developed in \cite{Connes1980}, the computation is performed and explicit formulas are obtained. The spectral geometry and study of scalar curvature  of  noncommutative tori has been pursued further in  
\cite{ Dabrowski-Sitarz2015,Fathi-Khalkhali2015,Fathi-Ghorbanpour-Khalkhali2016,Fathizadeh-Khalkhli2013-2,Khalkhali-Motadelro-Sadeghi2106,Fathizadeh2015,Liu2015}.

\section{Ricci functional on Riemannian manifolds}\label{classicalcase}
In this section, a spectral definition for the Ricci curvature is provided in the classical case.
Let $(M,g)$ be an oriented, closed Riemannian manifold of dimension $m$. 
We will follow the convention used in \cite{Gilkey2004} for the curvature tensor, however we will fix our own notation.
Let $\conn$ be the Levi-Civita connection of the metric $g$.
The Riemannian operator, $\riemop$, and the curvature tensor, $\riem$, are define by
\begin{eqnarray*}
\riemop(X,Y)&:=&\conn_X\conn_Y-\conn_Y\conn_X-\conn_{[X,Y]},\\
\riem(X,Y,Z,W)&:=&g(\riemop(X,Y)Z,W).
\end{eqnarray*}
With respect to the coordinate frame  $\partial_\mu=\frac{\partial}{\partial x^\mu}$, the components 
of the curvature tensor are denoted by
$$\riem_{\mu\nu\rho\epsilon}:=\riem(\partial_\mu,\partial_\nu,\partial_\rho,\partial_\epsilon).$$
The components of  the Ricci tensor $Ric$ and scalar curvature $R$ are given by
 \begin{eqnarray*}
\ric_{\mu\nu}&:=&g^{\rho\epsilon}\riem_{\mu\rho\epsilon\nu},\\
\scalar&:=&g^{\mu\nu}\ric_{\mu\nu}=g^{\mu\nu}g^{\rho\epsilon}\riem_{\mu\rho\epsilon\nu}.
\end{eqnarray*}

\subsection{Ricci curvature as a spectral functional of the de Rham complex}
Let $P:C^\infty(V)\to C^\infty(V)$ be a positive elliptic differential operator of order two  acting on the sections of a smooth Hermitian vector bundle $V$ over $M$.
The heat trace $\Tr(e^{-tP})$ admits a complete asymptotic expansion of the form \cite[Lemma 1.8.2]{Gilkey1995}  
\begin{eqnarray*}
\Tr(e^{-tP})\sim \sum_{n=0}^\infty a_n(P)t^{\frac{n-m}2},\qquad t\to 0^+.
\end{eqnarray*}
Each coefficient $a_n(P)$ can be realized as the integral of the trace of a locally computable  $\End(V)$-valued density $a_n(x,P)$;
\begin{equation*}
a_n(P)=\int \tr(a_n(x,P))dx.
\end{equation*}
Here $dx=\sqrt{\det{g_{\mu\nu}}}dx^1\cdots dx^n$ is the Riemannian volume form and $\tr$ denotes the matrix trace on the fibres of $\End(V)$.
The endomorphism $a_n(x,P)$ can be uniquely determined by localizing the heat trace by an auxiliary  smooth endomorphism $F$ of $V$, called a smearing endomorphism. 
The localized heat trace $\Tr(Fe^{-tP})$ has an asymptotic expansion as $t\to 0^+$ of the form \cite[Chapter 3]{Gilkey2004}
\begin{eqnarray}\label{smearedheatasymp}
\Tr(Fe^{-tP})\sim \sum_{n=0}^\infty a_n(F,P)t^{\frac{n-m}2},
\end{eqnarray}
with 
\begin{equation}\label{localizedcoef}
a_n(F,P)=\int_M \tr\big(F(x)a_n(x,P)\big) dx.
\end{equation}

If $P$ is a Laplace type operator i.e., its leading symbol is given by the metric tensor,  then the densities $a_n(x,P)$ can be expressed in terms of the Riemannian curvature, an endomorphism $E$, and their derivatives. 
The endomorphism $E$ measures how far the operator $P$ is from being the Laplacian $\conn^*\conn$ of a connection $\conn$ on $V$;   
\begin{equation}\label{decomtolapandend}
E=\conn^*\conn-P.
\end{equation}
Such a connection and endomorphism are unique for the given Laplace type operator $P$ \cite[Lemma 4.1.1]{Gilkey1995} 
The first two densities of the heat equation for such $P$ are given by \cite[Theorem 3.3.1]{Gilkey2004} 
\begin{eqnarray}
a_0(x,P)&=&(4\pi)^{-m/2}{\rm I},\label{a0}\\
a_2(x,P)&=&(4\pi)^{-m/2}\big(\frac{1}{6}\scalar(x)+E\big).
\end{eqnarray}
Here ${\rm I}$ denotes the identity bundle map on $V$.
For the Laplacian on functions $\lap_0$, the connection is the de Rham exterior derivative $d:C^\infty(M)\to \Omega^1(M)$, and obviously $E=0$. 
Hence the two first terms in the heat kernel of $\lap_0$ are given by 
\begin{eqnarray}
a_0(x,\lap_0)&=&(4\pi)^{-m/2},\label{a0functions}\\
a_2(x,\lap_0)&=&(4\pi)^{-m/2}\,\frac{1}{6}{\scalar(x)}.\label{a2functions}
\end{eqnarray}
In the case of the Laplacian on one forms $\lap_1:\Omega^1(M)\to \Omega^1(M)$, also called the Hodge-de Rham Laplacian, the connection in \eqref{decomtolapandend} is the Levi-Civita connection on the cotangent bundle. The endomorphism $E$ is the 
 negative of the Ricci operator, $E=-\ric$, on the cotangent bundle,
   which is defined by raising the first index of the Ricci tensor (denoted by $\ric$ as well); 
\begin{equation*}
\ric_x(\alpha^\sharp,X)=\ric_x(\alpha)(X),\quad \alpha\in T_x^*M,\,  X\in T_xM.
\end{equation*}
Therefore, one has
\begin{eqnarray}
a_0(x,\lap_1)&=&(4\pi)^{-m/2}{\rm I},\\ \label{a01forms}
a_2(x,\lap_1)&=&(4\pi)^{-m/2}\big(\frac{1}{6}{\scalar(x)}-\ric_x\big). \label{a21forms}
\end{eqnarray}
Furthermore, the function  $\tr(F)$  is smooth for every $F\in C^\infty(\End(T^*M))$, and can be used as a smearing function to localize the heat trace of $\lap_0$. 
Then \eqref{a2functions} and \eqref{a21forms} lead to the identity 
\begin{equation}\label{differenceofterms}
a_2(\tr(F),\lap_0)-a_2(F,\lap_1)=(4\pi)^{-m/2}\int_M \tr\big(F(x)\ric_x\big)dx ,
\end{equation}
for every $F\in C^\infty(\End(T^*M)),$ which motivates the following definition. 
\begin{definition}\label{riccifunctional}
The { Ricci functional} is the functional 
on $C^\infty(\End(T^*M))$ defined as 
\begin{equation*}
\ricfun(F)=a_2(\tr(F),\lap_0)-a_2(F,\lap_1),\quad F\in C^\infty(\End(T^*M)).
\end{equation*}
\end{definition} 
\begin{proposition} For an oriented closed Riemannian manifold $M$ of dimension $m$, we have 
\begin{equation*}
\Tr\left(\tr(F)e^{-t\lap_0}\right)-\Tr\left(F e^{-t\lap_1}\right)\sim \ricfun(F)\, t^{1-\frac{m}{2}} \text{ as } t\to 0^+.  
\end{equation*}
\end{proposition}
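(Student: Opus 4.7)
The plan is to substitute the localized heat-trace asymptotic expansion \eqref{smearedheatasymp} into both terms on the left-hand side and to verify, term by term, that $t^{1-m/2}$ is indeed the leading surviving order. Applying \eqref{smearedheatasymp} with $P=\lap_0$ and smearing function $\tr(F)\in C^\infty(M)$, and with $P=\lap_1$ and smearing endomorphism $F\in C^\infty(\End(T^*M))$, gives
\begin{equation*}
\Tr\bigl(\tr(F)e^{-t\lap_0}\bigr)-\Tr\bigl(Fe^{-t\lap_1}\bigr)\sim \sum_{n=0}^\infty \bigl(a_n(\tr(F),\lap_0)-a_n(F,\lap_1)\bigr)\, t^{(n-m)/2}.
\end{equation*}
The task then reduces to three things: (i) the $n=0$ terms cancel; (ii) the $n=1$ terms vanish; (iii) the $n=2$ coefficient equals $\ricfun(F)$.

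For (i), I would insert the densities from \eqref{a0} and \eqref{a0functions} into the localization formula \eqref{localizedcoef}. Since $a_0(x,\lap_0)=(4\pi)^{-m/2}$ and $a_0(x,\lap_1)=(4\pi)^{-m/2}{\rm I}$ on the rank-$m$ cotangent bundle, the matrix trace against $F$ and the scalar smearing by $\tr(F)$ both produce $(4\pi)^{-m/2}\int_M \tr(F(x))\,dx$, so the two leading contributions agree and cancel in the difference.

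For (ii), and in fact for every odd $n$, I would invoke the standard fact that the interior Seeley--DeWitt coefficients of a Laplace-type operator on a closed manifold without boundary vanish in odd degree (see, e.g., \cite[Lemma 1.8.2]{Gilkey1995}); this kills $a_1(\tr(F),\lap_0)$ and $a_1(F,\lap_1)$ individually, and more generally rules out any half-integer power of $t$ in the expansion. For (iii), the $n=2$ coefficient is, by construction, $a_2(\tr(F),\lap_0)-a_2(F,\lap_1)=\ricfun(F)$ by Definition~\ref{riccifunctional}, and the corresponding pointwise identity is already recorded as \eqref{differenceofterms}. Combining the three observations identifies $\ricfun(F)\,t^{1-m/2}$ as the leading asymptotic term, as claimed.

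There is no genuine obstacle here: the statement is a direct bookkeeping consequence of the existence of the localized heat expansion together with the trace-matching of the two $a_0$ densities and the parity vanishing of odd interior coefficients. The only step where one might pause is confirming that the asymptotics \eqref{smearedheatasymp} may be subtracted term-by-term, which is immediate since both expansions are genuine asymptotic series of $t$ with the same exponent lattice $\{(n-m)/2:n\ge 0\}$.
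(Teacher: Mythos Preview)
Your proof is correct and follows essentially the same route as the paper: subtract the two localized heat expansions, observe that the $n=0$ densities match (so the leading $t^{-m/2}$ terms cancel), and identify the $n=2$ coefficient with $\ricfun(F)$ via Definition~\ref{riccifunctional}. The only difference is that you are more explicit than the paper about step (ii), the vanishing of the odd-index coefficients $a_1$; the paper simply refers to the $t^{1-m/2}$ contribution as ``the second terms'' and leaves the parity vanishing implicit in the cited Gilkey reference.
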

\begin{proof}
By \eqref{a0} and \eqref{localizedcoef}, we have $\tr(F)a_0(x,\lap_0)=\tr(F(x)a_0(x,\lap_1)).$
This implies that
\begin{equation}\label{equalityoffirstterms}
a_0(\tr(F),\lap_0)=a_0(F,\lap_1),\qquad F\in C^\infty(\End(T^*M)).
\end{equation} 
The   asymptotic expansion of the localized heat kernel \eqref{smearedheatasymp} then shows that the first terms will cancel each other. The  difference of the second terms, which are multiples of $t^{1-\frac{m}{2}}$, will become the first term in the asymptotic expansion of the differences of localized heat kernels.  
\end{proof}

\subsection{Spectral zeta function and the Ricci functional} \label{zetaformulationofricci} 

The other spectral function assigned to a positive elliptic operator $P$ is the 
{ spectral zeta function} defined by 
\begin{equation*}
\zeta(s,P)=\Tr(P^{-s}({\rm I}-\pr)),\quad \Re (s)\gg 0,
\end{equation*} 
where $\pr $ is the projection on the kernel of $P$, which is finite dimensional. 
The localized version of the spectral zeta function is $\zeta(s,F,P)=\Tr(FP^{-s}({\rm I}-\pr))$. 
The spectral zeta function has a meromorphic extension to the complex plane $\mathbb{C}$ with  isolated  simple poles \cite[Lemma 1.3.7]{Gilkey2004}.
The residue at the poles, and also the values of the function at some of the negative real numbers, are related to the coefficients  of heat kernel (cf. 
\cite[Lemma 1.12.2]{Gilkey1995}).
This enables us to write the Ricci functional in terms of zeta functions.
\begin{proposition}
For an orientable closed Riemannian manifold $M$ of dimension $m>2$, we have 
\begin{equation}\label{zetariccimnot2}
\ricfun(F)=
 \Gamma(\frac{m}{2}-1)\res_{s=\frac{m}{2}-1}\Big(\zeta(s,\tr(F),\lap_0)-\zeta(s,F,\lap_1)\Big),\quad F\in C^\infty(\End(T^*M)).
\end{equation}
Moreover, if $M$ is two dimensional manifold of genus $g$, then we have 
\begin{equation}\label{zetariccim2}
\ricfun(F)=\zeta(0,\tr(F),\lap_0)-\zeta(0,F,\lap_1)+\Tr(\tr(F)\pr_0)-\Tr(F\pr_1),
\end{equation}
where $\pr_j$ is the projection on the kernel of Laplacian $\lap_j$, $j=0,1$.
\end{proposition}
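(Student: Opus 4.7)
The plan is to extract both identities from the Mellin transform representation
\[
\Gamma(s)\,\zeta(s,F,P)=\int_0^\infty t^{s-1}\Tr\!\big(Fe^{-tP}({\rm I}-\pr)\big)\,dt,
\]
and then specialize to $(F,P)=(\tr(F),\lap_0)$ and $(F,\lap_1)$ and subtract, invoking Definition \ref{riccifunctional}. The localized heat coefficients $a_n(F,P)$ from \eqref{smearedheatasymp} enter through the short-time asymptotics on $[0,1]$, while the behavior on $[1,\infty)$ gives an entire contribution because $P$ is strictly positive on the range of ${\rm I}-\pr$.

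First I would split the Mellin integral at $t=1$. Plugging the finite truncation of \eqref{smearedheatasymp} into $\int_0^1$ and using $e^{-tP}\pr=\pr$ (so that $\Tr(Fe^{-tP}({\rm I}-\pr))=\Tr(Fe^{-tP})-\Tr(F\pr)$) yields
\[
\Gamma(s)\zeta(s,F,P)=\sum_{n=0}^{N}\frac{a_n(F,P)}{s+\frac{n-m}{2}}-\frac{\Tr(F\pr)}{s}+H_N(s),
\]
with $H_N$ holomorphic on $\Re s>-\tfrac{N-m+1}{2}$. This is exactly the content of \cite[Lemma 1.12.2]{Gilkey1995}, which I would cite rather than rederive. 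From here, dividing by $\Gamma(s)$ gives the location and residues of the poles of $\zeta(s,F,P)$.

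For the case $m>2$, the point $s=\tfrac{m}{2}-1$ corresponds to $n=2$, and $\Gamma$ is regular there, so
\[
\res_{s=\frac{m}{2}-1}\zeta(s,F,P)=\frac{a_2(F,P)}{\Gamma\!\left(\frac{m}{2}-1\right)}.
\]
Multiplying by $\Gamma(\tfrac{m}{2}-1)$, applying this to $(\tr(F),\lap_0)$ and $(F,\lap_1)$, and subtracting gives \eqref{zetariccimnot2} by Definition \ref{riccifunctional}.

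The subtler case is $m=2$, where both $\Gamma(s)$ and the bracket in the Mellin formula have poles at $s=0$. Here I would expand $1/\Gamma(s)=s+\gamma s^2+O(s^3)$ and combine it with the contribution of the $n=m=2$ term together with the $-\Tr(F\pr)/s$ term to obtain
\[
\zeta(0,F,P)=a_2(F,P)-\Tr(F\pr).
\]
Applying this to $(\tr(F),\lap_0)$ and $(F,\lap_1)$ and subtracting produces \eqref{zetariccim2}. The main (minor) obstacle is the bookkeeping of these two coinciding singularities at $s=0$ in dimension two; everything else is a direct consequence of the standard Mellin dictionary between the heat expansion and the zeta function.
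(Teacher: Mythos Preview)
Your proposal is correct and follows essentially the same route as the paper: both reduce the statement to the standard Mellin-transform/heat-kernel dictionary relating $a_n(F,P)$ to residues of $\Gamma(s)\zeta(s,F,P)$ (the paper simply cites this as \cite[Lemma~1.3.7]{Gilkey2004}, while you sketch its derivation), and then specialize to $(\tr(F),\lap_0)$ and $(F,\lap_1)$ with the same case split between $m>2$ (where $\Gamma$ is regular at $\tfrac{m}{2}-1$) and $m=2$ (where the simple pole of $\Gamma$ at $0$ cancels against the simple pole coming from the $n=2$ and kernel terms). The only cosmetic difference is that the paper phrases the $m=2$ step as $\res_{s=0}\Gamma(s)\cdot\zeta(0,\cdot,\cdot)$ using regularity of $\zeta$ at $0$, whereas you expand $1/\Gamma(s)$; these are equivalent.
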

\begin{proof}
The relation between the values and residues of the localized zeta function of a second order positive elliptic operator $P$ and its localized heat trace coefficients \eqref{localizedcoef}  is  given by (cf. \cite[Lemma 1.3.7]{Gilkey2004})
\begin{equation}\label{residuezetaan}
a'_n(F,P)=\res_{s=\frac{m-n}{2}}\Big(\Gamma(s)\zeta(s,F,P)\Big),
\end{equation}
where 
$$a'_n(F,P)=
\begin{cases}
 a_m(F,P)-\Tr(FQ) & \text{ if } n=m\\ 
a_n(F,P) & \text{ if }n\neq m.
\end{cases}$$
If $m>2$, then 
\begin{equation*}
a_2(\tr(F),\lap_0)=\res_{\frac{m}{2}-1}\Gamma(s)\zeta(s,\tr(F),\lap_0).
\end{equation*}
Since the gamma function is regular at $s=m/2-1$, the right hand side is equal to $\Gamma(m/2-1)\res_{s=(m/2)-1} \zeta(s,\tr(F),\lap_0)$.
Similar argument applies to $\lap_1$ 
\begin{equation*}
a_2(F,\lap_1)=\Gamma(m/2-1)\res_{s=m/2-1}\zeta(s,F,\lap_1).
\end{equation*}
Combining them, we obtain \eqref{zetariccimnot2}.
In dimension two, on the other hand, we have 
\begin{equation*}
\ricfun(F)-\Tr\big(\tr(F)\pr_0\big)+\Tr(FQ_1)=\res_{s=0}(\Gamma(s)(\zeta(s,\tr(F),\lap_0)-\zeta(s,F,\lap_1)).
\end{equation*}
The zeta function of $\lap_j$, $j=0,1$, is regular at zero  hence the right hand side is equal to 
$$\Big(\zeta(0,\tr(F),\lap_0)-\zeta(0,F,\lap_1)\Big)\res_{s=0}\Gamma(s)=\zeta(0,\tr(F),\lap_0)-\zeta(0,F,\lap_1).$$
Putting it all together, \eqref{zetariccim2} is proven.
\end{proof}
\begin{remark}
By combining \eqref{residuezetaan} and \eqref{equalityoffirstterms}, it can be shown that
the difference of zeta functions $\zeta(s,\tr(F),\lap_0)-\zeta(s,F,\lap_1)$ is regular at $m/2$, and its first pole is located at $s=m/2-1$.
\end{remark}

\subsection{Ricci functional and the de Rham spectral triple}\label{RiccifundeRahm}

At first glance, it seems that the ingredients  used  to define the Ricci functional 
do not come from a spectral triple.
In other words, $\lap_0$ and $\lap_1$ are not the Laplacian of a Dirac type operator.
Nevertheless, they are part of the Laplacian of the de Rham complex, which as an elliptic complex gives rise to a Dirac type operator on forms.
By restricting ourselves to smooth endomorphisms of the cotangent bundle, and producing the right smearing endomorphism, we obtain the formalism given in Definition \ref{riccifunctional}.
In this section, we will investigate how the Ricci functional can be defined using the de Rham complex spectral triple.

Let $M$ be a closed orientable Riemannian manifold. 
Consider the de Rham spectral triple,
$$(C^\infty(M),L^2(\Omega^{ev}(M)) \oplus L^2(\Omega^{odd} (M)),d+\delta, \gamma),$$ which is the even spectral triple constructed from the de Rham complex.
Here, $d$ and $\delta$  denote the exterior derivative and  coderivative on the exterior algebra, and $\gamma$ is the $\mathbb{Z}_2$-grading on the forms whose eigenspace for eigenvalues 1 and -1 are even and odd forms, respectively.
 The full Laplacian on forms $\lap=d\delta+\delta d$ is the Laplacian of the Dirac operator $d+\delta$, and is the direct sum of Laplacians on $p$-forms, $\lap=\oplus \lap_p$. As a Laplace type operator, $\lap$ can be written as $\conn^*\conn-E$ by Weitzenb\"ock formula, where $\conn$ is the Levi-Civita connection extended to all forms and 
\begin{equation*}
E=-\frac12c(dx^\mu)c(dx^\nu)\Omega(\partial_\mu,\partial_\nu).
\end{equation*} 
Here $c$ denotes the Clifford multiplication and $\Omega$ is the curvature operator of the Levi-Civita  connection acting on exterior algebra 
(cf. \cite[Lemma 3.2.1]{Gilkey1995}). 
The restriction of $E$ to one forms gives the Ricci operator. 

To work with the Laplacian on one forms, we will use smooth endomorphisms $F$ of the cotangent bundle.   
The smearing endomorphism $\tilde{F}=\tr(F){\rm I}_0 \oplus F\in C^\infty(\End(\bigwedge^\bullet M))$, where ${\rm I}_0$ denotes the identity map on functions, can be used to localize the heat kernel of the full Laplacian and
\begin{equation}\label{riccifunctionalgamma}
\ricfun(F)=a_2(\gamma \tilde{F},\lap),\quad F\in C^\infty(\End(T^*M)).
\end{equation}
With the above notation, we can also rewrite the formulae \eqref{zetariccimnot2}, \eqref{zetariccim2} as 
\begin{equation}\label{Riccizetagamma}
\ricfun(F)=\begin{cases}
 \Gamma(\frac{m}{2}-1)\res_{s=\frac{m}{2}-1}\zeta(s,\tilde{F}\gamma,\lap)& m>2,\\&\\
\zeta(0,\gamma\tilde{F},\lap)+\Tr(\tr(F)\pr_0)-\Tr(F\pr_1)& m=2.
\end{cases}
\end{equation}

\section{Ricci functional for the noncommutative two tori}
We start this section by briefly reviewing several classical facts regarding the noncommutative two torus.
Then  we construct the modular de Rham spectral triple for the noncommutative two torus  for the conformally flat metrics.
The Laplacian of this spectral triple will be used to define the Ricci functional for the noncommutative two torus, and  then the Ricci density  will  explicitly be computed. 
\subsection{The noncommutative two torus $\A$}
Let $\theta$ be a real number. The noncommutative two torus $\A$ is the universal $C^\ast$-algebra generated by two unitary elements $U$ and $V$, $U^*=U^{-1}$ and $V^*=V^{-1}$, which satisfy the commutation relation
\begin{equation*}
VU=e^{2\pi i\theta}UV.
\end{equation*} The $C^*$-algebra $\A$ is naturally acted upon by $\mathbb{R}^2$, the action being given by the two-parameter group $\{\alpha_s\}_{s\in\mathbb{R}^2}$ of *-automorphisms 
\begin{equation}\label{actionofR2onA}
\alpha_s(U^nV^m)=e^{i(s_1n+s_2m)}U^nV^m,\quad s=(s_1,s_2)\in \mathbb{R}^2.
\end{equation}
The set of all elements $a\in\A$ for which the map $\mathbb{R}^2\ni s\mapsto \alpha_s(a)$ is smooth is an involutive subalgebra of $\A$, which will henceforth be denoted as $\Ai$. The algebra $\Ai$ admits the alternative description 
$$\Ai=\left\{\sum_{(m.n)\in \mathbb{Z}^2} a_{mn}U^mV^n| \{a_{mn}\}_{(m,n)\in \mathbb{Z}^2} \text{ is rapidly decaying}\right\}.$$
The infinitesimal generators of the action $\alpha$ on $\A$ in the direction of $e_1=(1,0)$ and $e_2=(0,1)$ respectively are the derivations $\delta_1$ and $\delta_2$ given by
\begin{eqnarray*}
\delta_1(U)=U,\quad && \delta_1(V)=0\\
\delta_2(U)=0,\quad && \delta_2(V)=V.
\end{eqnarray*} We note that $\delta_j(a)^*=-\delta_j(a^*)$, for $a\in \Ai$.

If $\theta$ is an irrational number, then it is well known that $\A$ is simple  and has a unique  tracial state $\varphi$, which acts on $\Ai$ as
\begin{equation*}
\varphi(\sum a_{mn}U^mV^n)=a_{00}.
\end{equation*}
The trace $\varphi$ is $\alpha$-invariant. Consequently $\varphi\circ \delta_j=0$ and $\varphi(a\delta_j(b))=-\varphi(\delta_j(a)b)$, for all $a,\,b\in \Ai$ and $j=1,\,2$.
The Hilbert space obtained by completing $\A$ with respect to the norm associated to the following inner product, is denoted by $\mathcal{H}_0$; 
\begin{equation*}
\langle a,b\rangle=\varphi(b^*a),\quad a,b\in\A.
\end{equation*}
For $\theta=0$, $A_\theta$ is the algebra $C(\mathbb{T}^2)$ of continuous functions on the torus $\mathbb{T}^2$ and $\delta_1$ and $\delta_2$ turn to $\frac{1}{i}\frac{\partial}{\partial x}$ and $\frac{1}{i}\frac{\partial}{\partial y}$  and $\varphi$ is nothing but integration with the volume form $dxdy$.

\subsection{The de Rham spectral triple for the noncommutative two torus}
 In this section, we describe the de Rham spectral triple of the noncommutative two torus $\A$. For this purpose, consider the vector space $W=\mathbb{R}^2$, and let $\tau$ be a complex number in the upper half plane, i.e. $\Im(\tau)>0$. Let  $g_\tau$ be the positive definite symmetric bilinear form on $W$  given by 
\begin{equation}\label{flattaumetricontangent}
g_\tau=\frac{1}{\Im(\tau)^2}
\begin{pmatrix}
|\tau|^2 & -\Re(\tau)\\ 
-\Re(\tau) & 1 
\end{pmatrix}.
\end{equation} 
The inverse $g_\tau^{-1}=\begin{pmatrix}
1 & \Re(\tau)\\ 
\Re(\tau) & |\tau|^2 
\end{pmatrix}$ of $g_\tau$ is a metric on the dual of $W$. 
The entries of $g_\tau^{-1}$ will be denoted by $g^{jk}$, $1\leq j,\,k\leq 2$.

Let now $\extp^\bullet (W^*_\mathbb{C},g^{-1}_\tau)$ be the  exterior algebra of $W^*_\mathbb{C}=(W\otimes \mathbb{C})^*$.
The algebra $$\Ai\otimes \extp^\bullet (W^*_\mathbb{C},g^{-1}_\tau),$$
will then play the role of the algebra of differential forms of the noncommutative two torus $\A$. 
In this framework, the Hilbert space of functions, denoted $\mathcal{H}^{(0)}$, is simply the Hilbert space given by the GNS construction of $\Ai$ with respect to 
$\frac{1}{\Im(\tau)}\varphi$. Additionally, the
Hilbert space of one forms, denoted $\mathcal{H}^{(1)}$, is the space $\mathcal{H}_0\otimes (\mathbb{C}^2,g_\tau^{-1})$ with inner product given by  
\begin{equation}\label{oneforminnerproduct}
\langle a_1\oplus a_2, b_1\oplus b_2  \rangle=  \frac{1}{\Im(\tau)}\sum_{j,k} g^{jk}\varphi(b_k^*a_j),\;\;a_i,\,b_i\in \Ai,
\end{equation}
while the Hilbert space of two forms, denoted $\mathcal{H}^{(2)}$, is given by the GNS construction of $\Ai$ with respect to $\Im(\tau)\, \varphi$.

The exterior derivative on elements of $\Ai$ is given by
\begin{equation}\label{donfunctionstau}
a\mapsto i\delta_1(a)\oplus i\delta_2(a),\quad a\in \Ai.
\end{equation}
It will be denoted by $d_0$, when considered as a densely defined operator from $\mathcal{H}^{(0)}$ to $\mathcal{H}^{(1)}$.
The operator $d_1: \mathcal{H}^{(1)}\to \mathcal{H}^{(2)}$ is defined on the elements of $\Ai\oplus\Ai$ as 
\begin{equation}\label{donformstau}
a\oplus b\mapsto i\delta_1(b)-i\delta_2(a),\qquad a,b\in\Ai.
\end{equation}
The adjoints of the operators $d_0: \mathcal{H}^{(0)}\to \mathcal{H}^{(1)}$ and $d_1:\mathcal{H}^{(1)}\to \mathcal{H}^{(2)}$ are then given by 
\begin{eqnarray*}
d_0^*(a\oplus b) &=& -i \delta_1(a) -i \Re(\tau) \delta_2(a)-i\Re(\tau) \delta_1(b)-i|\tau|^2 \delta_2(b),\\
d_1^*(a) &=& (i|\tau|^2 \delta_2(a)+i\Re(\tau) \delta_1(a))\oplus (-i\Re(\tau) \delta_2(a)-i\delta_1(a)),
\end{eqnarray*} for all $a,b\in \Ai$.
\begin{definition}\label{deRhamspectralflat}
 The (flat) de Rham spectral triple of $\A$ is the even spectral triple 
$(\A,\mathcal{H},D),$
where $\mathcal{H}=\mathcal{H}^{(0)}\oplus\mathcal{H}^{(2)} \oplus \mathcal{H}^{(1)}$, $ D=\begin{pmatrix}
0& d^*\\ d& 0
\end{pmatrix},$ and $d=d_0+d_1^*$.
\end{definition}
\noindent Note that the operator $d$ and its adjoint $d^*=d_1+d_0^*$ act on $\Ai\oplus \Ai$ as
\begin{equation}\label{nonpertddstar}
d=\begin{pmatrix}
i\delta_1 & i|\tau|^2 \delta_2+i\Re(\tau) \delta_1\\
i\delta_2 & -i\Re(\tau) \delta_2-i\delta_1
\end{pmatrix},\quad
d^*=\begin{pmatrix}
-i\delta_1-i\Re(\tau) \delta_2 &-i\Re(\tau) \delta_1- i|\tau|^2 \delta_2\\
-i\delta_2 & i\delta_1
\end{pmatrix}. 
\end{equation}
Note also that the de Rham spectral triple introduced in Definition \ref{deRhamspectralflat}, is isospectral to the de Rham complex spectral triple of the flat torus $\mathbb{T}^2$ with the metric given by \eqref{flattaumetricontangent}.

\subsection{The modular de Rham spectral triple }\label{modular}

In this section, we introduce the modular de Rham spectral triple for the noncommutative two torus $\A$, which resembles the de Rham complex on the torus when its flat metric is conformally perturbed. 
We shall show that the modular de Rham spectral triple is the transposed spectral triple, in the sense of \cite{Connes-Moscovici2014}, of a modular spectral triple obtained by perturbing certain spectral triple by a Weyl factor $k=e^{h/2}$. 

The impact of a conformal change of the Riemannian metric on the de Rham complex is prescribed by the change of pointwise inner products on forms and by the change of the Riemannian volume form. 
Nonetheless, it can be represented by simply multiplying the volume form by an appropriate function.  
For instance, on a manifold of dimension $m$, if we denote $e^{-h}g$ by $\tilde{g}$, then the new volume form $\tilde{dx}$ will be $e^{-\frac{m}{2}h}dx$,  $\tilde{g}^{-1}=e^hg^{-1}$ on $T^*M$, and $\wedge^2\tilde{g}^{-1}=e^{2h}\wedge^2 g^{-1}$ on $\wedge^2 T^*M$. 
As a result, the inner products on functions and on two forms will change as follows:
\begin{align*}
\langle f_1,f_2\rangle_{\tilde{g}}&=\int_M f_1 {f}_2 e^{-\frac{m}{2}h}dx \quad &f_1,f_2\in \Omega^0(M),\\
\langle \alpha_1,\alpha_2\rangle_{\tilde{g}}&=\int_M g^{-1}(\alpha_1,\alpha_2) e^{\frac{2-m}{2}h}dx \quad &\alpha_1,\alpha_2\in \Omega^1(M),\\
\langle \omega_1,\omega_2\rangle_{\tilde{g}}&=\int_M \wedge^2g^{-1}(\omega_1,{\omega}_2) e^{\frac{4-m}{2}h}dx\quad &\omega_1,\omega_2 \in \Omega^2(M).
\end{align*}  
Note that in the two-dimension case, the inner product on one forms remains unchanged.

The above classical fact enables us to study the conformal change of metric for the noncommutative two torus, by perturbing the tracial state $\varphi$ by a noncommutative Weyl factor.
This strategy was considered in \cite{Connes-Cohen1992,Connes-Tretkoff2011} with the purpose of analyzing the conformal perturbation of metric for the Dolbeault complex on the noncommutative two torus.
In this paper, we will employ the same strategy to find the de Rham complex and hence de Rham spectral triple for the noncommutative two torus when the metric is perturbed conformally.

The conformal perturbation of the metric on the noncommutative two torus is implemented by changing the tracial state $\varphi$ by a noncommutative Weyl factor $e^{-h}$, where the dilaton $h$ is a selfadjoint smooth element of the noncommutative two torus, $h=h^*\in\Ai$.  
The conformal change of the metric by the Weyl factor $e^{-h}$ will change the inner product on functions and on two forms as follows.
On functions, the Hilbert space given by GNS construction of $\A$ with respect to the positive linear functional $\varphi_{0}(a)=\frac{1}{\Im(\tau)}\varphi(ae^{-h})$ will be denoted by $\mathcal{H}_h^{(0)}$. 
Therefore the inner product of $\mathcal{H}_h^{(0)}$ is given by   
\begin{equation*}
\langle a,b \rangle_0=\frac{1}{\Im(\tau)}\varphi(b^*ae^{-h}),\quad a,b\in \A.
\end{equation*}
On one forms, the Hilbert space will stay the same as in \eqref{oneforminnerproduct}, and will be denote by $\mathcal{H}^{(1)}_h$.
 On the other hand, the Hilbert space of two forms, denoted by $\mathcal{H}^{(2)}_h$, is the Hilbert space given by the GNS construction of $\A$ with respect to $\varphi_2(a)=\Im(\tau)\varphi(a e^{h})$. 
 Hence its inner product is given by
\begin{equation*}
\langle a,b \rangle_2={\Im(\tau)}\varphi(b^*ae^{h}),\quad a,b\in \A.
\end{equation*}
The  positive functional $a\mapsto \varphi(ae^{-h})$, called the conformal weight, is a twisted trace of which modular operator is given by  
$$\Delta(a)=e^{-h}ae^{h},\quad a \in\A.$$
The logarithm $\log\Delta$ of the modular operator will be denoted by $\nabla$, and is given by $\nabla(a)=-[h,a]$ (for more details, see \cite{Connes-Cohen1992,Connes-Tretkoff2011}).

The exterior derivatives are defined in the same way they were defined in the flat case \eqref{donfunctionstau} and \eqref{donformstau}. However, to emphasize that they are acting on different Hilbert spaces, we will denote them by $d_{h,0}:\mathcal{H}_h^{(0)}\to \mathcal{H}_h^{(1)}$ and $d_{h,1}:\mathcal{H}_h^{(1)}\to \mathcal{H}_h^{(2)}$. 

\begin{lemma}\label{adjointlemma}
Let $k=e^{h/2}$ where $h=h^*\in\Ai$ is the dilaton of the conformal weight. Then the adjoint operators of $d_{h,0}$ and $d_{h,1}$ are given by 
\begin{eqnarray*}
d^*_{h,0}&=&  R_{k^2}\comp  d_0^*,\\
d^*_{h,1} &=&  d_1^* \comp R_{k^2}\,,
\end{eqnarray*} 
where $R_{k^2}$ denotes the right multiplication by $k^2$.
\end{lemma}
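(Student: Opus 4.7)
The plan is to compute both adjoints directly from the definition, using the explicit inner products on the perturbed Hilbert spaces and the tracial property of $\varphi$.

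First I observe that the operators $d_{h,0}$ and $d_{h,1}$ act by exactly the same formulas \eqref{donfunctionstau} and \eqref{donformstau} as $d_0$ and $d_1$ in the flat case; the only difference is the underlying Hilbert space structure. Also the one-form inner product on $\mathcal{H}_h^{(1)}$ coincides with that on $\mathcal{H}^{(1)}$, so the only modifications relative to the flat case are the presence of the Weyl factor $e^{-h}$ in the degree-zero inner product and $e^{h}$ in the degree-two inner product.

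For the first identity, I would take $a\in\Ai$ and $\xi\in \Ai\oplus\Ai$ and compute
\begin{equation*}
\langle d_{h,0}(a),\xi\rangle_1 \;=\; \langle d_0(a),\xi\rangle_{\mathcal{H}^{(1)}}\;=\;\langle a,d_0^*(\xi)\rangle_{\mathcal{H}^{(0)}}\;=\;\tfrac{1}{\Im(\tau)}\varphi\bigl(d_0^*(\xi)^*\,a\bigr),
\end{equation*}
using the flat adjoint relation for $d_0^*$. On the other hand, by definition of $\langle\cdot,\cdot\rangle_0$,
\begin{equation*}
\bigl\langle a,\, R_{k^2}(d_0^*(\xi))\bigr\rangle_0 \;=\;\tfrac{1}{\Im(\tau)}\varphi\bigl((d_0^*(\xi)\,e^{h})^*\,a\,e^{-h}\bigr)\;=\;\tfrac{1}{\Im(\tau)}\varphi\bigl(e^{h}\,d_0^*(\xi)^*\,a\,e^{-h}\bigr),
\end{equation*}
and the tracial property of $\varphi$ cyclically permutes $e^{-h}$ to cancel $e^{h}$, matching the previous expression. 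This proves $d_{h,0}^* = R_{k^2}\comp d_0^*$.

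For the second identity, I would similarly compute for $\eta\in\Ai\oplus\Ai$ and $b\in\Ai$:
\begin{equation*}
\langle d_{h,1}(\eta),b\rangle_2\;=\;\Im(\tau)\,\varphi\bigl(b^*\,d_1(\eta)\,e^{h}\bigr)\;=\;\Im(\tau)\,\varphi\bigl((bk^2)^*\,d_1(\eta)\bigr),
\end{equation*}
where I used traciality together with $(bk^2)^*=k^2 b^*=e^{h}b^*$. This last expression equals $\langle d_1(\eta),bk^2\rangle_{\mathcal{H}^{(2)}}$, and applying the flat adjoint relation for $d_1$ gives $\langle\eta,d_1^*(bk^2)\rangle_{\mathcal{H}^{(1)}}=\langle\eta,(d_1^*\comp R_{k^2})(b)\rangle_1$, which yields the claim.

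There is no real obstacle here; the only thing to be careful about is the placement of the Weyl factor on the correct side in each adjoint, which is controlled entirely by where the perturbation $e^{\pm h}$ sits inside the inner product and by one application of cyclicity of $\varphi$. Both calculations are on the smooth subalgebra $\Ai$, so no domain issues arise.
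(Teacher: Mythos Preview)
Your proof is correct. It differs slightly from the paper's argument: the paper computes from scratch the adjoint of each $\delta_j$ viewed as a map between the weighted Hilbert spaces $\mathcal{H}_n$ and $\mathcal{H}_{n'}$ (with inner product $\varphi(\,\cdot\, e^{nh})$), using the integration-by-parts identity $\varphi(c^*\delta_j(a))=\varphi(\delta_j(c)^*a)$ directly; from that general formula the two stated adjoints follow by specializing $(n,n')$. You instead factor through the already-computed flat adjoints $d_0^*$ and $d_1^*$, so the integration by parts is never repeated and the only new ingredient is a single application of the cyclicity of $\varphi$ to pass between the weighted and unweighted inner products. Your route is a bit more economical for the two cases at hand, while the paper's route gives a uniform formula $R_{k^{-2n'}}\comp\delta_j\comp R_{k^{2n}}$ valid for all weights; both arguments rest on the same ingredients (traciality and selfadjointness of $e^h$).
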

\begin{proof}
It is enough to show that the adjoint of $\delta_j:\mathcal{H}_n\to \mathcal{H}_{n'}$, $j=1,2$, is equal to 
$$R_{k^{-2n'}}\comp\delta_j\comp R_{k^{2n}}.$$ 
If for any $n\in\mathbb{Z}$,  the Hilbert space given by the GNS constructions of $\A$ with respect to the positive functional $a\mapsto \varphi(ae^{nh})$ is denoted by $\mathcal{H}_n$. 
For this purpose, let $a,\,b\in \Ai$. Then 
\begin{align*}
\langle \delta_j(a),b \rangle_n &=\varphi(b^*\delta_j(a)e^{nh}) \\
&=\varphi((be^{nh})^*\delta_j(a))\\
&=\varphi(\delta_j(be^{nh})^*a)\\
&= \langle a,\delta_j(b e^{nh})e^{-n'h}\rangle_{n'},
\end{align*} 
which concludes the proof.\end{proof}

Next, we consider the Hilbert spaces $\mathcal{H}^{+}_h=\mathcal{H}_h^{(0)}\oplus  \mathcal{H}_h^{(2)}$ and $\mathcal{H}^{-}_h=\mathcal{H}_h^{(1)}$, and the operator $d_h:\mathcal{H}^{+}_h\to \mathcal{H}^{-}_h$, $d_h=d_{h,0}+ d_{h,1}^*$. Therefore
\begin{equation*}
d_h=\begin{pmatrix}
i\delta_1 & \big(i|\tau|^2 \delta_2+i\Re(\tau) \delta_1\big)\comp R_{k^2}\\
i\delta_2 & \big(-i\Re(\tau) \delta_2-i\delta_1\big)\comp R_{k^2}
\end{pmatrix},
\end{equation*}
and its adjoint is given by
 \begin{equation*}
d_h^*=\begin{pmatrix}
R_{k^2}\comp \big( i\delta_1-i\Re(\tau) \delta_2\big) &R_{k^2}\comp\big(-i\Re(\tau) \delta_1- i|\tau|^2 \delta_2\big)\\
-i\delta_2 & i\delta_1
\end{pmatrix} .
\end{equation*}
We also consider the operator   
\begin{equation*}
D_h=\begin{pmatrix}
0 & d_h^*\\ d_h & 0
\end{pmatrix},
\end{equation*}
which acts on $\mathcal{H}_h= \mathcal{H}^{+}_h\oplus\mathcal{H}_h^{-}$. Define the Hilbert space $\mathcal{H}=\mathcal{H}_0\oplus\mathcal{H}_0\oplus\mathcal{H}_0\oplus\mathcal{H}_0$ and the unitary operator $W:\mathcal{H}\to \mathcal{H}_h,$ $$W=R_k\oplus R_{k^{-1}}\oplus {\rm {I}\sb{\mathcal{H}_0\oplus\mathcal{H}_0}}.$$ The operator $D_h$ can be transferred to an operator $\tilde{D}_h$ on $\mathcal{H}$ by the inner perturbation 
$$\tilde{D}_h:=W^* D_h W=\begin{pmatrix} 0 &  R_k \comp d^*\\ d\comp R_k & 0 \end{pmatrix}.$$

In order to define the modular de Rham spectral triple for the noncommutative two torus, we employ the following constructions from \cite{Connes-Moscovici2014}. 
Let $(\mathcal{A},\mathcal{H}^+\oplus \mathcal{H}^-,D)$ be an even spectral triple with  grading operator $\gamma$, where $D=\begin{pmatrix}
0 & T^*\\ T & 0
\end{pmatrix}$  and $T:\mathcal{H}^+\to\mathcal{H}^-$ is an unbounded operator with adjoint $T^*$. If $f\in\mathcal{A}$ is positive and invertible,
then $(\mathcal{A,H},D_{(f,\gamma)})$ is a modular spectral triple with respect to the inner automorphism $\sigma(a)=faf^{-1}$, $a\in \mathcal{A}$  ( \cite[Lemma 1.1]{Connes-Moscovici2014}), where the Dirac operator is given by $$D_{(f,\gamma)}=\begin{pmatrix}
0 & f T^*\\ Tf & 0
\end{pmatrix}. $$
On the other hand, any modular spectral triple $(\mathcal{A,H},D)$ with an automorphism $\sigma$ admits a transposed modular spectral triple $(\mathcal{A}^{\rm op},\bar{\mathcal{H}}, D^t) $ \cite[Proposition 1.3]{Connes-Moscovici2014}, where $\mathcal{A}^{\rm op}$ is the opposite algebra of $\mathcal{A}$, $\bar{\mathcal{H}}$ is the dual Hilbert space, the action of $\mathcal{A}^{\rm op}$ on $\bar{\mathcal{H}}$ is the transpose of the the action of $\mathcal{A}$ on $\mathcal{H}$, $D^t$ is the transpose of $D$, and $\sigma'$ is the automorphism of $\mathcal{A}^{\rm op}$ given by $\sigma'(a^{\rm op})=(\sigma^{-1}(a))^{\rm op}$.

\begin{proposition}
Let $k=e^{h/2}$, where $h=h^*\in\Ai$.
 The triple $(\A^{\rm op},\mathcal{H},\tilde{D}_h)$  is a modular spectral triple, where the automorphism  of $\A^{\rm op}$ is given by
\begin{equation*}
a^{\rm op}\mapsto (k^{-1}ak)^{\rm op},\quad a\in \Ai,
\end{equation*} 
and the representation of $\A^{\rm op}$ on $\mathcal{H}$ is given by the right multiplication of $\A$ on $\mathcal{H}$.
Moreover, the transposed of the modular spectral triple $(\A^{\rm op},\mathcal{H},\tilde{D}_h)$ is isomorphic to the perturbed spectral triple 
\begin{equation}\label{finalspectraltriple}
(\A,\mathcal{H},\bar{D}_h),\quad 
\bar{D}_h=\begin{pmatrix}
0 & k d\\ d{}^* k & 0
\end{pmatrix},
\end{equation}
where the operators $d$ and $d^*$ are as in \eqref{nonpertddstar}.
\end{proposition}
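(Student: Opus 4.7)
The plan is to produce $\tilde{D}_h$ as a Connes--Moscovici perturbation of an auxiliary spectral triple built from the opposite algebra $\A^{\rm op}$, and then to apply the transposition construction to land in the form $\bar{D}_h$. For the first step, observe that $(\A^{\rm op},\mathcal{H},D)$, with $\A^{\rm op}$ acting on $\mathcal{H}$ via right multiplication and $D$ being the Dirac operator of the flat de Rham spectral triple of Definition \ref{deRhamspectralflat}, is itself an even spectral triple: the analytical properties of $D$ are inherited from the flat case, and the bounded commutator condition reduces to the Leibniz identity
\begin{equation*}
[\delta_j,R_a]=R_{\delta_j(a)},\qquad a\in\Ai,
\end{equation*}
which is bounded. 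Since $k=e^{h/2}$ is positive and invertible, so is $k^{\rm op}\in\A^{\rm op}$, and its action on $\mathcal{H}$ is $R_k$. Lemma 1.1 of \cite{Connes-Moscovici2014} applied with $f=k^{\rm op}$ then yields the modular spectral triple with Dirac operator
\begin{equation*}
D_{(k^{\rm op},\gamma)}=\begin{pmatrix}0 & R_k\comp d^*\\ d\comp R_k & 0\end{pmatrix}=\tilde D_h,
\end{equation*}
and modular automorphism $\sigma(a^{\rm op})=k^{\rm op}a^{\rm op}(k^{\rm op})^{-1}=(k^{-1}ak)^{\rm op}$, using that multiplication in $\A^{\rm op}$ is the reverse of that in $\A$.

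For the second statement, Proposition 1.3 of \cite{Connes-Moscovici2014} yields the transposed modular spectral triple $(\A,\bar{\mathcal{H}},\tilde D_h^t)$ with transposed automorphism $\sigma'(a)=kak^{-1}$. I would construct an isomorphism with $(\A,\mathcal{H},\bar D_h)$ using the tracial Tomita conjugation $J\colon\mathcal{H}\to\bar{\mathcal{H}}$ that acts componentwise on $\mathcal{H}_0^{\oplus 4}$ as $a\mapsto a^*$. Under $J$-conjugation, right multiplication $R_k$ becomes left multiplication by $k^*=k$, so the transposed action of $\A$ on $\bar{\mathcal{H}}$ is converted back to the natural left-multiplication representation on $\mathcal{H}$; moreover, $J$ anticommutes with $i$ and satisfies $J\delta_jJ=-\delta_j$, so each operator $i\delta_j$ is preserved, and hence so are the matrix entries of $d$ and $d^*$ in \eqref{nonpertddstar} (whose coefficients $|\tau|^2$ and $\Re(\tau)$ are real). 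The selfadjointness identity $(kd)^*=d^*k$, which holds because $k=k^*$, then guarantees that the resulting operator is indeed the selfadjoint $\bar D_h=\begin{pmatrix}0 & kd\\ d^*k & 0\end{pmatrix}$.

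The main technical obstacle is verifying that these ingredients assemble into precisely the operator $\bar D_h$ as stated, and not into some related but distinct selfadjoint operator. The subtlety is that transposition reverses composition order and exchanges the upper-right and lower-left blocks of the Dirac, effectively swapping the roles of $d$ and $d^*$ in the off-diagonal positions; one has to keep careful track of how the antilinear $J$, the grading $\gamma$, and this block exchange interact, so that the $R_k$-factors on the outside of $d^*$ and $d$ in $\tilde D_h$ are transported to the $k$-factors in the positions shown in $\bar D_h$. The remaining verifications—boundedness of commutators $[\bar D_h, L_a]$ via the Leibniz rule, positivity and invertibility of $k^{\rm op}$, and the explicit form of the transposed automorphism—are direct and follow from the general framework of \cite{Connes-Moscovici2014}.
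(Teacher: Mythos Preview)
Your proposal is correct and uses the same two ingredients as the paper---Lemma~1.1 and Proposition~1.3 of \cite{Connes-Moscovici2014} together with the Tomita conjugation $J$---but in the reverse order: you apply the perturbation lemma to $(\A^{\rm op},\mathcal{H},D)$ with $f=k^{\rm op}$ acting as $R_k$ to obtain $\tilde D_h$ directly, whereas the paper applies it to $(\A,\mathcal{H},D^t)$ with $f=k$ to obtain $\bar D_h$ first and then transports back via $J_{\mathcal H}J$. The paper also separates the antiunitary $J:\mathcal H\to\mathcal H$ from the Riesz identification $J_{\mathcal H}:\mathcal H^c\to\bar{\mathcal H}$, which makes the bookkeeping you flag as the ``main technical obstacle'' (how transposition swaps the off-diagonal blocks and turns $R_k$ into $L_k$) more transparent; apart from this organizational choice the arguments coincide.
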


\begin{proof}
First of all, we note that $\bar{D}_h=(D^t)_{(k,\gamma)}$ where $D$ is the Dirac operator of the (flat) de Rham spectral triple (Definition \ref{deRhamspectralflat}) and $D^t=\begin{pmatrix}
0 & d\\ d{}^*  & 0
\end{pmatrix}$ is the transposed of $D$. 
Hence $(\A,\mathcal{H},\bar{D}_h)$ is a modular spectral triple with automorphism $\sigma(a)=kak^{-1}$.

We remark that the adjoint map $a\mapsto a^*$ defines an anti-unitary operator $J:\mathcal{H}\to\mathcal{H}$, or equivalently a unitary map from $\mathcal{H}$ to its complex conjugate $\mathcal{H}^c$.
We transform the spectral triple \eqref{finalspectraltriple} using $J$.
The action of $\operatorname{Ad}J$ on $A_\theta$ will transform an element $a\in \A$ into $JaJ^{-1}=R_{a^*}$, and 
$$J\bar{D}_hJ^{-1}= \begin{pmatrix}
0  & R_k\comp d^*\\ d\comp R_k  & 0
\end{pmatrix}.$$
Let $J_\mathcal{H}:\mathcal{H}^c\to \bar{\mathcal{H}}$ be the unitary operator $\mathcal{H}\ni\xi\mapsto J_\mathcal{H}(\xi)=\langle \cdot, \xi\rangle\in \bar{H}$.
One can readily see that $T^t=J_\mathcal{H} T^* J^{-1}_\mathcal{H}$, for every $T\in B(\mathcal{H})$.
Then for any element $a\in \Ai$, acting as an operator on $\mathcal{H}$, we have, 
$$
J_\mathcal{H} J a J^{-1} J^{-1}_\mathcal{H}=J_\mathcal{H}R_{a^*} J^{-1}_\mathcal{H}=J_\mathcal{H}R_{a}^* J^{-1}_\mathcal{H}=R_a{}^t=a.$$ 
On the other hand, 
$$J_\mathcal{H} J\bar{D}_h J^{-1}J^{-1}_\mathcal{H}=J^{-1}_\mathcal{H}\begin{pmatrix}
0  & R_k\comp d^*\\ d\comp R_k  & 0
\end{pmatrix}J^{-1}_\mathcal{H}=\tilde{D}_h{}^t.
$$
Therefore the spectral triple $(\A,\mathcal{H},\bar{D}_h)$ is transformed, under the action of the unitary map $J_\mathcal{H} J$, into the transposed spectral triple $(\A,\bar{\mathcal{H}},\tilde{D}_h^t)$ of $(\A^{\rm op},\mathcal{H},\tilde{D}_h)$. With respect to this identification, the automorphism $\sigma$ remains unchanged. Consequently the inner automorphism for $(\A,\bar{\mathcal{H}},\tilde{D}_h^t)$ is given by
$\sigma'(a^{\rm op})=(\sigma^{-1}(a))^{\rm op}=(k^{-1}ak)^{\rm op}.$
\end{proof}

\begin{definition}\label{modularspectraltriple}
The modular spectral triple $(\A,\mathcal{H},\bar{D}_h)$ in \eqref{finalspectraltriple} will be called the {modular de Rham spectral triple} of the noncommutative two torus with dilaton $h$.
\end{definition}

\subsection{Laplacian of the modular de Rham spectral triple}

The geometric information of the modular de Rham spectral triple can be extracted from the spectrum of its Laplacian $\bar{D}_h^2$.     
A straightforward computation shows that the Laplacian is the direct sum of three components, denoted by $\lap_{h,0}, \lap_{h,1}$ and $\lap_{h,2}$, which are the analogues of the Laplacian on functions, on one forms, and on two forms respectively, i.e.,
\begin{equation}\label{Lapisdirectsumtau}
\bar{D}_h{}^2=\begin{pmatrix}
kdd^*k &0\\
0 & d^*k^2d
\end{pmatrix}=\lap_{h,0}\oplus \lap_{h,2}\oplus \lap_{h,1}.
\end{equation}
If we denote the flat Laplacian on functions by 
$$\lap_0=d_0^*d_0=\delta_1^2+2\Re(\tau)\delta_1\delta_2+|\tau|^2\delta_2^2,$$
then
\begin{eqnarray*}
\lap_{h,0}=\lap_{h,2}&=&k\lap_0k.
\end{eqnarray*}
Moreover, the operator $\lap_{h,1}:\mathcal{H}^{(1)}\to \mathcal{H}^{(1)}$ is given on $\Ai\oplus \Ai$ by  
\begin{eqnarray}\label{lap1}
\begin{split}
\lap_{h,1}&=
\big(\delta_1k^2\delta_1+\Re(\tau)\delta_1 k^2 \delta_2+\Re(\tau)\delta_2 k^2 \delta_1+|\tau|^2\delta_2k^2\delta_2\Big)\otimes {\rm I_2}\\
& \qquad\qquad\qquad+\big( \delta_1k^2\delta_2-\delta_2k^2\delta_1\big)\begin{pmatrix}
0 & \Im(\tau)^2  \\
 -1& 0
\end{pmatrix}.
\end{split}
\end{eqnarray}

The Laplacian $\bar{D}_h^2$ is directly related to the Laplacian $\bar{D}^2_\varphi$ of the modular spectral triple  $(\Ai,\mathcal{H}, \bar{D}_\varphi)$ of weight $\varphi$ defined in \cite[Definition 1.10]{Connes-Moscovici2014}, as follows. By \cite[Lemma 1.11]{Connes-Moscovici2014}, the Laplacian $\bar{D}_\varphi^2$ is the direct sum of two parts 
 $$\bar D^2_\varphi=\begin{pmatrix}
 \lap_\varphi & 0\\ 0& \lap_\varphi^{(0,1)}
 \end{pmatrix},$$
 where $\lap_\varphi$ is equal to our $\lap_{h,0}$ and  
\begin{equation*}
\lap_\varphi^{(0,1)}=(\delta_1+\tau \delta_2)k^2(\delta_1+\bar\tau\delta_2)=\delta_1k^2\delta_1+\bar\tau \delta_1k^2\delta_2+\tau\delta_2k^2\delta_1+|\tau|^2\delta_2k^2\delta_2.
\end{equation*}
Furthermore, one can readily see that $\lap_{h,1}$ is a first order perturbation of $\lap_\varphi^{(0,1)}$, namely 
\begin{equation}\label{dallap-deRhamlap}
\lap_{h,1}=\lap_\varphi^{(0,1)}\otimes {\rm I}_2+\Big(\delta_1(k^2)\delta_2-\delta_2(k^2)\delta_1\Big)\otimes\sigma,
\end{equation}
where $\sigma=\begin{pmatrix}
i\Im(\tau) & \Im(\tau)^2  \\
 -1& i\Im(\tau)
\end{pmatrix}.$

\subsection{Ricci functional and Ricci density for the modular de Rham spectral triple}

Using the pseudodifferential calculus with  $\Ai\otimes M_4(\mathbb{C})$-valued symbols (see Section \ref{computation}), one can show that the localized heat trace of  $\bar{D}_h^2$  has an asymptotic expansion of the form \eqref{smearedheatasymp}, and its coefficients are of the following form (see the \S\ref{Heatkernelpseudo});  
\begin{equation*}
a_n(E,\bar D_h^2)=\varphi\big(\tr\left(E\, c_n(\bar D_h^2)\right)), \qquad E\in \Ai\otimes M_4(\mathbb{C}),
\end{equation*} 
where $c_n(\bar D_h^2)\in \Ai\otimes \big(M_2(\mathbb{C})\oplus M_2(\mathbb{C})\big)\subset \Ai\otimes M_4(\mathbb{C})$ and $\tr$ denotes the matrix trace.
Inspired by \eqref{Riccizetagamma}, now  we  can define  the Ricci functional for the noncommutative two torus. 
\begin{definition}\label{riccifunctionalnc}
The { Ricci functional} of the modular de Rham spectral triple $(\A,\mathcal{H}, \bar D_h)$ is the functional on $\A\otimes M_2(\mathbb{C})$ defined as
\begin{equation*}
\ricfun(F)=a_2(\gamma \tilde{F},\bar D^2)=\zeta(0,\gamma\tilde{F},\bar {D}^2_h)+\Tr(\tr(F)\pr_0)-\Tr(F \pr_1),\qquad \tilde{F}=\tr(F)\oplus 0\oplus F.
\end{equation*}
Here $\pr_j$ denotes the orthogonal projection on the kernel of $\lap_{h,j}$, for $j=0,1$.
\end{definition}

\begin{lemma}\label{Riccidensitylemma} 
There exists an element $\ricden\in\Ai\otimes M_2(\mathbb{C})$ such that 
\begin{equation*}
\ricfun(F)=\frac{1}{\Im(\tau)}\varphi(\tr(F\ricden)e^{-h}), \quad F\in \Ai\otimes M_2(\mathbb{C}).
\end{equation*}
\end{lemma}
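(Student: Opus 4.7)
The plan is to reduce the lemma to the pseudodifferential formula $a_n(E,\bar{D}_h^2)=\varphi(\tr(E\,c_n(\bar{D}_h^2)))$ for $E\in \Ai\otimes M_4(\mathbb{C})$ that is asserted immediately before the statement. First I would show that the projection corrections in Definition \ref{riccifunctionalnc} exactly cancel the $\Tr(EQ)$ subtraction in the dimension-two identity $\zeta(0,E,P)=a_2(E,P)-\Tr(EQ)$ recalled in Section \ref{zetaformulationofricci}. With $E=\gamma\tilde F=\tr(F)\oplus 0\oplus(-F)$ and $Q=\pr_0\oplus\pr_2\oplus\pr_1$, the vanishing middle block gives $\Tr(\gamma\tilde F\,Q)=\Tr(\tr(F)\pr_0)-\Tr(F\pr_1)$, which is precisely the quantity added back in the definition of $\ricfun$. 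Therefore $\ricfun(F)=a_2(\gamma\tilde F,\bar{D}_h^2)$.

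Next, the block decomposition $\bar{D}_h^2=\lap_{h,0}\oplus\lap_{h,2}\oplus\lap_{h,1}$ from \eqref{Lapisdirectsumtau} induces a splitting $c_2(\bar{D}_h^2)=c_2(\lap_{h,0})\oplus c_2(\lap_{h,2})\oplus c_2(\lap_{h,1})$, with the first two densities scalar in $\Ai$ and the third in $\Ai\otimes M_2(\mathbb{C})$. Since $\gamma\tilde F$ is block-diagonal with zero middle block, only the first and third blocks contribute, and the pseudodifferential formula yields
\[
a_2(\gamma\tilde F,\bar{D}_h^2)=\varphi(\tr(F)\,c_2(\lap_{h,0}))-\varphi(\tr(F\,c_2(\lap_{h,1}))).
\]
A short computation in $\Ai\otimes M_2(\mathbb{C})$ shows that $\tr(F\cdot(a\otimes {\rm I}_2))=\tr(F)\,a$ for any $a\in\Ai$; applying this with $a=c_2(\lap_{h,0})$ lets me merge the two terms into a single matrix trace:
\[
\ricfun(F)=\varphi(\tr(F\,Z)),\qquad Z:=c_2(\lap_{h,0})\otimes {\rm I}_2-c_2(\lap_{h,1})\in\Ai\otimes M_2(\mathbb{C}).
\]

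Finally I would set $\ricden:=\Im(\tau)\,Z\,e^{h}\in \Ai\otimes M_2(\mathbb{C})$. Invoking the same matrix identity once more (with $a=e^{h}$), one has $\tr(F\,\ricden)\,e^{-h}=\Im(\tau)\,\tr(F\,Z)\,e^{h}e^{-h}=\Im(\tau)\,\tr(F\,Z)$, and hence
\[
\tfrac{1}{\Im(\tau)}\,\varphi(\tr(F\,\ricden)\,e^{-h})=\varphi(\tr(F\,Z))=\ricfun(F),
\]
which establishes the lemma. There is no genuine obstacle at this stage: the statement is a formal consequence of the Connes-style symbol calculus and the block structure of the Laplacian, and its sole purpose is to package the Ricci information into a single density element in $\Ai\otimes M_2(\mathbb{C})$. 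The substantive work — computing $c_2(\lap_{h,0})$ and $c_2(\lap_{h,1})$ by Connes' pseudodifferential calculus and recognising the resulting closed form — is deferred to Section \ref{computation} and is what drives the main theorem.
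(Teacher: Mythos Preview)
Your proposal is correct and follows essentially the same route as the paper: both arguments use the block decomposition \eqref{Lapisdirectsumtau} to reduce $a_2(\gamma\tilde F,\bar D_h^2)$ to $a_2(\tr(F),\lap_{h,0})-a_2(F,\lap_{h,1})$, then invoke the identity $\tr(F\,(a\otimes{\rm I}_2))=\tr(F)\,a$ (the paper phrases it as $\tr(F)e^{-t\lap_{h,0}}=\tr(Fe^{-t\lap_{h,0}\otimes{\rm I}_2})$) to package both contributions into a single density $\ricden=\Im(\tau)\big(c_2(\lap_{h,0})\otimes{\rm I}_2-c_2(\lap_{h,1})\big)e^h$. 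Your opening paragraph verifying that the projection terms cancel the $\Tr(EQ)$ correction is unnecessary here, since Definition~\ref{riccifunctionalnc} already \emph{defines} $\ricfun(F)$ as $a_2(\gamma\tilde F,\bar D_h^2)$ and merely records the zeta-function expression as an equivalent formulation; the paper accordingly starts directly from $a_2$.
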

\begin{proof}
Because of the form of the Laplacian $\bar D^2_h$ \eqref{Lapisdirectsumtau}, for any $F\in \Ai\otimes M_2(\mathbb{C})$, we have
$$a_2(\gamma \tilde{F},\bar D^2 )=a_2(\tr(F),\lap_{h,0})-a_2(F,\lap_{h,1}).$$
On the other hand, $\tr(F)e^{-t\lap_{h,0}}=\tr(Fe^{-t\lap_{h,0}\otimes {\rm I}_2})$ for any $t>0$, and thus
$$a_2(\tr(F),\lap_{h,0})=a_2(F,\lap_{h,0}\otimes I_2).$$
As a result, we have
\begin{eqnarray*}
\ricden(F)&=&a_2(\tr(F),\lap_{h,0})-a_2(F,\lap_{h,1})\\
&=& \varphi\left(\tr \Big(F\big(c_2(\lap_{h,0})\otimes {\rm I}_2- c_2(\lap_{h,1})\big)\Big)\right)\\
&=& \frac{1}{\Im(\tau)} \varphi\left(\Im(\tau)\tr \Big(F\big(c_2(\lap_{h,0})\otimes {\rm I}_2- c_2(\lap_{h,1})\big)\Big)e^h e^{-h}\right).
\end{eqnarray*}
Hence, 
\begin{equation}\label{ricdenexists}
\ricden=\Im(\tau)\Big(c_2(\lap_{h,0})\otimes {\rm I}_2- c_2(\lap_{h,1})\Big)e^h.
\end{equation}
\end{proof}
\begin{definition}
The element $\ricden$ will be called the {Ricci density} 
of the modular spectral triple with dilaton $h$. 
\end{definition}

The terms $c_2(\lap_{h,j})$ can be computed as integrals of the terms of the symbol of parametrix of $\lap_{h,j}$ (see \S \ref{Heatkernelpseudo}). 
Since the operator $\lap_{h,1}$ is the first order perturbation of $\lap_\varphi^{(0,1)}$,  we will only need to compute the difference
$c_2(\lap_{h,1})-c_2(\lap_\varphi^{(0,1)})\otimes {\rm I}_2$. 
The terms $c_2(\lap_{h,0})=c_2(k\lap_0k)$ and $c_2(\lap_\varphi^{(0,1)})$ are computed in  both \cite[Theorem 3.2]{Connes-Moscovici2014} and \cite[Theorem 5.3]{Fathizadeh-Khalkhali2013}, and the difference is given by
\begin{eqnarray}\label{scalar}
   R^\gamma &=& \big(c_2(k\lap k)\otimes{\rm I}_2-c_2(\lap_\varphi^{(0,1)})\big) e^h\\\nonumber
   &=&-\frac{\pi}{\Im(\tau)}\Big(K_\gamma(\nabla)(\lap_0(\log k))+H_\gamma\big(\nabla_1,\nabla_2\big)\left(\square_\Re (\log k)\right)\\
   && \qquad\qquad\qquad\qquad\qquad\qquad+ S(\nabla_1,\nabla_2)
(\square_\Im (\log k))\Big)e^h.\notag
 \end{eqnarray}
Here,  
\begin{eqnarray*}
\square_\Re (\ell) &=&
(\delta_1(\ell))^2+ \Re(\tau)\left(\delta_1(\ell)\delta_2(\ell)+\delta_2(\ell)\delta_1(\ell)\right)+|\tau|^2 (\delta_2(\ell))^2,\\
\square_\Im(\ell)&=& i\Im(\tau)(\delta_1(\ell)\delta_2(\ell)-\delta_2(\ell)\delta_1(\ell))
\end{eqnarray*}
with $\ell = \log k.$
Moreover, 
$$
K_\gamma(u) = \, \frac{\frac 12+\frac{\sinh(u/2)}{u}}{\cosh^2(u/4)}, 
$$
\begin{align*}
\begin{split}
&H_\gamma(s,t)= \big(1-\cosh((s+t)/2)\big)\\
&*\frac{t (s+t) \cosh(s)-s (s+t) \cosh(t)+(s-t) (s+t+\sinh(s)+\sinh(t)-\sinh(s+t))}{s t (s+t) \sinh\left(\frac{s}{2}\right) \sinh\left(\frac{t}{2}\right) \sinh\left(\frac{s+t}{2}\right)^2},
\end{split}
\end{align*}
\begin{equation}\label{Sfunction}
S(s,t)=\frac{(s+t-t\, \cosh(s)-s\, \cosh(t)-\sinh(s)-\sinh(t)+\sinh(s+t))}{s\, t\left(\sinh\left(\frac{s}{2}\right) \sinh\left(\frac{t}{2}\right) \sinh\left(\frac{s+t}{2}\right)\right)}.
\end{equation}

Now we can state the main result of this paper, which gives the Ricci density of the modular de Rham spectral triple. 
The computations can be found in the next section.
\begin{theorem}\label{maintheorem}
Let $k=e^{h/2}$ where $h=h^*\in \Ai$ is the dilaton.
Then the Ricci density of the modular de Rham spectral triple with dilaton $h$ is given by 
\begin{equation*}
\ricden= \frac{\Im(\tau)}{4\pi^2}R^\gamma\otimes {\rm I}_2- \frac{1}{4\pi} S(\nabla_1,\nabla_2)\big([\delta_1(\log k),\delta_2(\log k)]\big)e^h\otimes \begin{pmatrix}
i\Im(\tau) & \Im(\tau)^2  \\
 -1& i\Im(\tau)
\end{pmatrix}\,.
\end{equation*}
\end{theorem}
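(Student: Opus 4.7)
The plan exploits the perturbative splitting built into the set-up. By Lemma~\ref{Riccidensitylemma}, the Ricci density is
$$\ricden = \Im(\tau)\bigl[c_2(\lap_{h,0})\otimes {\rm I}_2 - c_2(\lap_{h,1})\bigr]e^h,$$
and identity \eqref{dallap-deRhamlap} expresses $\lap_{h,1}$ as $\lap_\varphi^{(0,1)}\otimes {\rm I}_2$ plus the first-order perturbation $P\otimes\sigma$, with $P=\delta_1(k^2)\delta_2-\delta_2(k^2)\delta_1$. I therefore split
$$c_2(\lap_{h,0})\otimes {\rm I}_2 - c_2(\lap_{h,1}) = \bigl[c_2(\lap_{h,0})-c_2(\lap_\varphi^{(0,1)})\bigr]\otimes {\rm I}_2 \;-\; \bigl[c_2(\lap_{h,1})-c_2(\lap_\varphi^{(0,1)})\otimes {\rm I}_2\bigr].$$
The first bracket is identified, via \eqref{scalar}, with $R^\gamma e^{-h}\otimes {\rm I}_2$; multiplied by $\Im(\tau)e^h$ and weighted by the heat-kernel normalization $(4\pi)^{-m/2}$ for $m=2$, it produces the $\frac{\Im(\tau)}{4\pi^2}R^\gamma\otimes {\rm I}_2$ piece. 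All remaining substance lies in the second bracket.

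To handle that bracket I would apply Connes' pseudodifferential calculus \cite{Connes1980} with $\Ai\otimes M_2(\mathbb{C})$-valued symbols. The principal symbol $a_2 = k^2|\xi|^2_{g_\tau^{-1}}\otimes {\rm I}_2$ is unchanged; only the subleading symbol acquires the extra summand $\bigl(i\delta_1(k^2)\xi_2 - i\delta_2(k^2)\xi_1\bigr)\otimes\sigma$. Writing the parametrix expansion $b_0+b_1+b_2+\cdots$ for $(\lap_{h,1}-\lambda)^{-1}$ via the standard recursion, $b_0$ is unaffected, and the $\otimes\sigma$ contribution to $b_2$ collects exactly the terms linear in the perturbation; any $\sigma^2$-type contributions would land in the diagonal piece and be absorbed into the already-known scalar-curvature block. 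This yields a closed symbolic expression for $\Delta b_2(\xi,\lambda) := b_2(\lap_{h,1})-b_2(\lap_\varphi^{(0,1)})\otimes {\rm I}_2$.

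The final analytic step is to perform the contour integral $\frac{-1}{2\pi i}\oint e^{-\lambda}\Delta b_2\,d\lambda$ followed by the Euclidean $\xi$-integral over $\mathbb{R}^2$, and then to recast the resulting noncommutative expression as a functional-calculus statement in the modular derivations $\nabla_1,\nabla_2$. The rearrangement lemma of Connes--Tretkoff and Connes--Moscovici is the essential tool: it converts noncommutative integrals of the form $\int f(k^2|\xi|^2)\,k^\alpha[\delta_i(k),\delta_j(k)]\,k^\beta\,d\xi$ into $T(\nabla_1,\nabla_2)$ applied to a commutator, times right-multiplication by $e^h$, where $T$ is read off from a one-dimensional integral. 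Since $P$ is antisymmetric in $(\delta_1,\delta_2)$, only the commutator $[\delta_1(\log k),\delta_2(\log k)]$ can survive; and the one-parameter integrals that arise here are precisely those already tabulated in \cite{Connes-Moscovici2014,Fathizadeh-Khalkhali2013}, so the same spectral function $S$ from \eqref{Sfunction} reappears.

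The main obstacle is the matrix-valued noncommutative bookkeeping: systematically expanding the resolvent symbol to the required order in the perturbation, tracking the $\sigma$-tensor factor independently of the $\Ai$-tensor factor, and reducing the resulting expression to precisely $S(\nabla_1,\nabla_2)\bigl([\delta_1(\log k),\delta_2(\log k)]\bigr)e^h\otimes\sigma$ with the correct sign and prefactor $-\tfrac{1}{4\pi}$. Two sanity checks guide the computation: $\tr(\sigma)=2i\Im(\tau)\neq 0$, so the off-diagonal term contributes nontrivially to $\ricfun$, confirming the paper's claim that Ricci is not a scalar multiple of scalar curvature in the noncommutative setting; and at $h=0$ the commutator vanishes (as does $R^\gamma$), so the formula correctly collapses to zero, reflecting that the flat noncommutative torus is Ricci-flat.
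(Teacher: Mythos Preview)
Your overall strategy coincides with the paper's: the perturbative splitting via \eqref{dallap-deRhamlap}, the symbol-level recursion \eqref{parametrixsymbols} for the parametrix, identification of the diagonal block with $R^\gamma$, and the final reduction of the off-diagonal block via the rearrangement lemma are exactly the steps the paper carries out. There is, however, one concrete error in your bookkeeping that would cause terms to be dropped.

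You assert that ``any $\sigma^2$-type contributions would land in the diagonal piece and be absorbed into the already-known scalar-curvature block.'' This is false: the matrix $\sigma$ is nilpotent only in a twisted sense --- one checks directly that $\sigma^2 = 2i\Im(\tau)\,\sigma$, \emph{not} a multiple of ${\rm I}_2$. Consequently the contribution quadratic in the perturbation $a_1''$ (arising from the $b_1 a_1 b_0$ term of the recursion, which produces $b_0 a_1'' b_0 a_1'' b_0 \otimes \sigma^2$) stays in the $\sigma$-block rather than moving to the diagonal. The paper's expression \eqref{b2doubleprime} for $b_2''$ explicitly contains the term $2i\Im(\tau)\,b_0 a_1'' b_0 a_1'' b_0$ for precisely this reason. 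Your plan to collect ``exactly the terms linear in the perturbation'' for the $\sigma$-component would therefore miss this quadratic contribution, and without it the subsequent angular and radial integrals do not assemble into the function $S$ with the stated coefficient.

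A smaller point: the prefactor $\tfrac{1}{4\pi^2}$ on $R^\gamma$ does not come from a classical heat-kernel normalization $(4\pi)^{-m/2}$ (which would give $\tfrac{1}{4\pi}$ in dimension two), but from the measure convention $d\xi=(2\pi)^{-2}d_L\xi$ in the pseudodifferential calculus together with the explicit radial integrals; the paper records this as $\int_{\mathbb{R}^2}\bigl(b_2^{k\lap_0 k}(\xi,-1)-b_2'(\xi,-1)\bigr)\,d\xi=\tfrac{1}{4\pi^2}R^\gamma$.
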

\begin{remark}
In the commutative case, i.e. when $\theta=0$, the formula of the Ricci density $\ricden$ is given by 
$\lim_{(s,t)\to (0,0)} \ricden$.
As noted in \cite[Remark 5.4.]{Fathizadeh-Khalkhali2013},
$$\lim_{(s,t)\to (0,0)}R^\gamma=-\frac{\pi}{\Im(\tau)} \lap_0(\log k),$$
and because $[\delta_1(\log k),\delta_2(\log k)]=0$, we have
\begin{equation*}
\ricden|_{\theta=0}=\frac{-1}{4\pi} \lap_0(\log k)e^h\otimes {\rm I}_2.
\end{equation*}   
Considering the normalization of the classical case that comes from the heat kernel coefficients (see for example \eqref{differenceofterms}), this gives the formula for the Ricci operator in the classical case.
\end{remark}
\begin{remark}
For two dimensional manifolds, the Ricci tensor of the conformally perturbed metric $\tilde{g}=e^{-h}g$ is given by 
$$\tilde\ric=\ric+\frac{1}{2}\lap(h) g,$$
where $ \lap$  is the Laplacian on functions.
In particular, if $g$ is a flat metric then the Ricci tensor is equal to $\frac{1}{2}\lap(h) g.$
Hence the Ricci operator is a multiple of the identity map of which coefficient is the half of the scalar curvature $R=e^{h}\lap h$.

Unlike the commutative case, the Ricci density $\ricden$ of the noncommutative two torus  is not a multiple of the identity matrix any more. It is not even a symmetric matrix (with entries in $\Ai$). 
Indeed, it has nonzero off diagonal terms,  which are multiples of 
$S(\nabla_1,\nabla_2)([\delta_1(\log k),\delta_2(\log k)])$. 
This phenomenon, observed here for the first time, is obviously a consequence of the noncommutative nature of the space under investigation.   
\end{remark}

\section{Computation of the Ricci density for the noncommutative two torus}\label{computation}
Our main purpose  in this section  is to compute the Ricci density of the modular de Rham spectral triple. To perform this task, we employ the strategy developed in \cite{Connes-Cohen1992,Connes-Tretkoff2011}, which has also been used in \cite{Fathizadeh-Khalkhali2013,Connes-Moscovici2014}.

\subsection{Pseudodifferential calculus on $\A$}

In this section, we briefly review the notion of pseudodifferential calculus defined for a $C^\ast$-dynamical system $(A,\mathbb{R}^m,\alpha)$ by  Connes in \cite{Connes1980}. Recently, a twisted version of this theory was introduced in \cite{Lesch-Moscovici2016}.

Let  $(A,\mathbb{R}^2,\alpha)$ be a $C^\ast$-dynamical system. 
Let  $A^\infty$ denote the algebra of smooth elements of $A$, i.e. these elements $a\in A$ for which the $A$-values function $\mathbb{R}^2 \ni s\to \alpha_s(a)$ is smooth.
We denote by $V:\mathbb{R}^2\to M(A\rtimes \mathbb{R}^2)$ the canonical unitary representation of $\mathbb{R}^2$ on the multiplier algebra of the crossed product algebra.
For a given non-negative integer multi index $\alpha= (\alpha_1, \alpha_2)$, we use the following notation:
\begin{equation*}
\partial^{\alpha}=\frac{\partial^{\alpha_1}}{\partial \xi_1^{\alpha_1}}  \frac{\partial^{\alpha_2}}{\partial \xi_2^{\alpha_2}},\qquad \delta^{\alpha}= \delta_1^{\alpha_1}\delta_2^{\alpha_2},
\end{equation*}
where $\delta_j$ is the infinitesimal generator of the $\alpha$ in the direction of $e_j$.

A smooth map $\rho: \mathbb{R}^2 \to A^{\infty}$ is called
 a { symbol of order $d$}, if 
 \begin{enumerate}
 \item[(i)] For every non-negative integers $i_1, i_2, j_1,
j_2,$ there exists a constant $C$ such that
\begin{equation*}
 \|\delta^{(i_1, i_2)}  \partial^{(j_1, j_2)}  \rho(\xi) \|
\leq C (1+|\xi|)^{d-j_1-j_2}.
\end{equation*}
\item [(ii)]
There exists a smooth map $k: \mathbb{R}^2 \to
A^{\infty}$ such that
$$\lim_{\lambda \to \infty} \lambda^{-d} \rho(\lambda\xi_1, \lambda\xi_2) = k (\xi_1, \xi_2).$$
\end{enumerate}
The space of symbols of order $d$ is denoted by $S^d(A^\infty)$.

To every symbol $\rho\in S^d(A^\infty)$, one can assign the pseudodifferential multiplier operator $P_\rho:\mathcal{S}(\mathbb{R}^2,A^\infty)\to \mathcal{S}(\mathbb{R}^2,A^\infty)$ acting on the space $\mathcal{S}(\mathbb{R}^2,A^\infty)$ of all $A^\infty$-valued Schwartz functions, defined by
\begin{equation}\label{pseudooperator}
P_\rho=\int_{\mathbb{R}^n}\check{\rho}(\xi)V_\xi d\xi.
\end{equation}
Here $d\xi=(2\pi)^{-2}d_L\xi$, where $d_L\xi$ is the Lebesgue measure on $\mathbb{R}^2$, and $\check{\rho}$ denotes the inverse Fourier transform of $\rho$. 
In \cite[Proposition 8]{Connes1980}, it was shown that the product of two pseudodifferential multipliers $P_{\rho_1}$ and $P_{\rho_2}$, with $\rho_j\in S^{d_j}(A^\infty)$, $j=1,2$, is also a pseudodifferential multiplier with symbol $\rho\in S^{d_1+d_2}(A^\infty)$. 
The product symbol $\rho$ is given asymptotically as follows:
\begin{equation}\label{productsymbol}
\rho\sim \sum_{\alpha=(\alpha_1,\alpha_2) \geq 0} 
\frac{1}{\alpha_1!\alpha_2!}\partial^{\alpha} (\rho_1)
\delta^{\alpha}(\rho_2).
\end{equation}

For a covariant representation $(U,\pi)$ of the $C^*$-dynamical system $(A,\mathbb{R}^2,\alpha)$ on a Hilbert space $\mathcal{H}$, the pseudodifferential multiplier $P_\rho$ induces a densely-defined differential operator on $\mathcal{H}$, denoted by the same letter $P_\rho$, by simply replacing $V_\xi$ with $U_\xi$ in \eqref{pseudooperator}. In particular, if $\omega$ is an $\alpha$-invariant state of $A$, then the left regular  representation of $A$ on the GNS Hilbert space  $\mathcal{H}_\omega$ gives rise to a covariant representation of $(A,\mathbb{R}^2,\alpha)$.
Then the pseudodifferential operator $P_\rho$ on $\mathcal{H}_\omega$ is defined  on the elements of $A^\infty$ as     
\begin{equation*}
P_\rho(a)=\int_{\mathbb{R}^2}e^{is\xi}\rho (\xi)\alpha_{\xi}(a)d\xi.
\end{equation*}
The product of symbols of pseudodifferential operators with $A^\infty$-valued symbols on $\mathcal{H}$ is also given by \eqref{productsymbol}.  

In this work, we shall use the symbol calculus of the pseudodifferential operators of the $C^\ast$-dynamical system $(A_\theta \otimes M_2(\mathbb{C}), \mathbb{R}^2,\alpha\otimes {\rm Id})$ on $\mathcal{H}_0\otimes \mathbb{C}^2$, where $\alpha$ is the action \eqref{actionofR2onA}.

\subsection{Heat kernel coefficients via pseudodifferential calculus}\label{Heatkernelpseudo}
The asymptotic expansion of the  trace  of the heat kernel can be computed using the symbol calculus. 
In the classical case, this technique was effectively used in the heat kernel proof of the index theorem (cf. 
\cite[Section 1.8]{Gilkey1995}). In the noncommutative case, it was used for local computations of spectral invariants for the noncommutative two torus in \cite{Connes-Cohen1992,Connes-Tretkoff2011,Connes-Moscovici2014,Fathizadeh-Khalkhali2012}. 
For the sake of completeness, we outline the main steps of this technique, and then we use it to compute the Ricci density for the noncommutative two torus.

Let $P$ be a positive differential operator of order two with the symbol $\sigma(P)$, written as 
$$\sigma(P)=a_2+a_1+a_0\in S^2(\Ai\otimes M_2(\mathbb{C})),$$ 
where each term $a_j$ is homogeneous of order $j$. 
Moreover we assume that $P$ is elliptic, that is $a_2(\xi)$ is invertible for all non-zero $\xi\in\mathbb{R}^2$.  
For every $\lambda\in\mathbb{C}\setminus\mathbb{R}^{\geq 0}$, the parametrix $(P-\lambda)^{-1}$ of $P$ is a pseudodifferential operator of order $-2$ and its symbol  can be expressed as
\begin{equation}\label{parametrixsymbol}
\sigma\left((P-\lambda)^{-1}\right)=b_0(\xi,\lambda)+b_1(\xi,\lambda)+b_2(\xi,\lambda)+\cdots,
\end{equation}
where $b_n(\xi,\lambda)$ is  $(-n-2,1)$-homogeneous in $(\xi,\lambda)$ (cf. \cite{Gilkey1995}).
The symbol product \eqref{productsymbol} resulted from the equation $(P-\lambda)(P-\lambda)^{-1}= I$, up to a pseudodifferential operator of order $-\infty$, provides a recursive formula that allows us to find $b_n(\xi,\lambda)$ \cite{Connes-Cohen1992,Connes-Tretkoff2011}.
For instance,
\begin{eqnarray}\label{parametrixsymbols}
\nonumber b_0(\xi,\lambda)&=&(a_2(\xi)-\lambda)^{-1},\\ \nonumber
b_1(\xi,\lambda)&=&-\Big(b_0a_1b_0+\partial_1(b_0)\delta_1(a_2)b_0+\partial_2(b_0)\delta_2(a_2)b_0\Big),\\
b_2(\xi,\lambda)&=&-\Big(b_0a_0b_0+b_1a_1b_0+\partial_1(b_0)\delta_1(a_1)b_0+\partial_2(b_0)\delta_2(a_1)b_0 \\ \nonumber 
&& +\partial_1(b_1)\delta_1(a_2)b_0 +\partial_2(b_1)\delta_2(a_2)b_0+\partial_1\partial_2(b_0)\delta_1\delta_2(a_2)b_0\\ \nonumber 
&& +\frac{1}{2}\partial_1^2(b_0)\delta^2_1(a_2)b_0+\frac{1}{2}\partial_2^2(b_0)\delta^2_2(a_2)b_0\Big).\nonumber 
\end{eqnarray}

The operator $e^{-tP}$ is a pseudodifferential operator of order $-\infty$. 
It can also be defined using the Cauchy formula in terms of the parametrix,
\begin{equation*}
e^{-tP}=\frac{1}{2\pi i}\int_\gamma e^{-t\lambda}(P-\lambda)^{-1}d\lambda,
\end{equation*} 
where $\gamma$ is a clockwise-oriented contour around the non-negative part of real axis.
The above formula provides us with the asymptotic expansion formula of the symbol as $t\to 0^+$:
\begin{equation*}
\sigma(e^{-tP})\sim \sum_{n=0}^\infty t^{\frac{n-2}{2}}b_n(\xi),
\end{equation*}
where
$$b_n(\xi)=\frac{1}{2\pi i}\int_\gamma e^{-t \lambda} b_n(\xi,\lambda) d\lambda.$$
Note that every $F\in \Ai\otimes M_2(\mathbb{C})$ is a differential operator of order zero. By fixing such a $F$, one can readily see that 
$$\sigma(Fe^{-tP})\sim \sum_{n=0}^\infty t^{\frac{n-2}{2}} F b_n(\xi).$$
On the other hand, $Fe^{-tP}$ is a trace class operator and its trace is given by $\Tr\big(Fe^{-tP}\big)=\int_{\mathbb{R}^2}\varphi\big(F\sigma(e^{-tP})\big)d\xi$. 
Therefore, the asymptotic expansion of the localized heat trace $\Tr(Fe^{-tP})$ is given by 
\begin{equation*}
\Tr(Fe^{-tP})\sim \sum_{n=0}^\infty t^{\frac{n-2}{2}} \int_{\mathbb{R}^2}\varphi\Big(\tr\big(Fb_n(\xi)\big)\Big) d\xi .
\end{equation*}
The coefficient of the $t^{-1+n/2}$, denoted by $a_n(F,P)$, is given by  
\begin{equation}\label{localformofterms}
a_n(F,P)=\varphi\big(\tr(Fc_n(P))\big),\;\mbox{where}\;\, c_n(P)= \frac{1}{2\pi i}\int_{\mathbb{R}^2}\int_\gamma e^{-\lambda}  b_n(\xi,\lambda) d\lambda d\xi.
\end{equation}
For $n=2$, it can be shown by using a homogeneity argument (cf. \cite[Section 6]{Connes-Moscovici2014}) that 
\begin{equation}\label{locala2}
c_2(P)=\int_{\mathbb{R}^2} b_2(\xi,-1)d\xi.
\end{equation} 
In the next section, we will use these computations when the operator $P$ is one of the Laplacians $\lap_{h,j}$ or $\lap_\varphi^{(0,1)}$.

\subsection{Computation of the Ricci density}
The operators $\lap_{h,0}$ and $\lap_{h,1}$ are pseudodifferential operators associated with the dynamical systems $(\A,\mathbb{R}^2,\alpha)$ and  
$(\A\otimes M_2(\mathbb{C}),\mathbb{R}^2,\alpha\otimes {\rm Id})$ respectively, represented on the covariant representation spaces $\Hcal_0$ and $\Hcal_0\oplus \Hcal_0$.
In this section, we will perform the computation \eqref{locala2} for the Laplacians $\lap_{h,1}$, and then borrowing a computation done in \cite{Fathizadeh-Khalkhali2013,Connes-Moscovici2014}, we will find the Ricci density as it is given in Theorem \ref{maintheorem}. 
First of all, we describe the relation between the term $b_2(\xi,\lambda)$ for $\lap_{h,1}$ and the corresponding term for $\lap_\varphi^{(0,1)}$.
Let us start with the symbol of $\lap_{h,1}$, which can easily be computed using \eqref{lap1}, as follows.

\begin{lemma} 
The symbol $\sigma(\lap_{h,1})$ of the operator $\lap_{h,1}$ is the sum of homogeneous terms $a_j(\xi)$, $j=0,1,2$, given by
$$
a_2(\xi)=k^2\big(\xi_1^2+2\Re(\tau)\xi_1\xi_2+|\tau|^2\xi_2^2\big)\otimes{\rm I}_2,\quad
a_0(\xi)= 0,$$
\begin{equation*}
{\footnotesize a_1(\xi)
=\begin{pmatrix}
\delta_1(k^2)\xi_1+2\Re(\tau)\delta_1(k^2)\xi_2+|\tau|^2 \delta_2(k^2)\xi_2&
|\tau|^2(\delta_1(k^2)\xi_2-\delta_1(k^2)\xi_1)\\ &\\
\delta_1(k^2)\xi_1-\delta_1(k^2)\xi_2 &\hspace{-0.5cm}
\delta_1(k^2)\xi_1+2\Re(\tau)\delta_2(k^2)\xi_1+ |\tau|^2 \delta_2(k^2)\xi_2
\end{pmatrix}}.
\end{equation*}
\end{lemma}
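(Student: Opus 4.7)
My plan is to compute the symbol of $\lap_{h,1}$ by a direct application of Connes' symbol product formula \eqref{productsymbol} to the explicit expression given in \eqref{lap1}. The operator $\lap_{h,1}$ acts on $\mathcal{H}^{(1)}=\mathcal{H}_0\oplus \mathcal{H}_0$, and is built out of two kinds of pieces: a scalar combination of terms of the form $\delta_i\, k^2\, \delta_j$ tensored with $\mathrm{I}_2$, and the antisymmetric combination $\delta_1 k^2\delta_2-\delta_2 k^2\delta_1$ tensored with the constant matrix $\begin{pmatrix} 0 & \Im(\tau)^2 \\ -1 & 0\end{pmatrix}$. So the whole computation reduces to finding, for each $(i,j)\in\{1,2\}^2$, the symbol of the product $\delta_i\,(k^2\delta_j)$ of two pseudodifferential operators on the $C^*$-dynamical system $(\A\otimes M_2(\mathbb{C}),\mathbb{R}^2,\alpha\otimes\mathrm{Id})$, viewed with symbols $\xi_i$ and $k^2\xi_j$ respectively.

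The key simplification is that $\xi_i$ is linear in $\xi$, so $\partial^\alpha(\xi_i)$ vanishes for $|\alpha|\ge 2$, and only the two multi-indices $\alpha=(0,0)$ and $\alpha=e_i$ contribute in \eqref{productsymbol}. This gives exactly
\begin{equation*}
\sigma(\delta_i\, k^2\, \delta_j)=k^2\xi_i\xi_j+\delta_i(k^2)\xi_j,
\end{equation*}
a second-order symbol $k^2\xi_i\xi_j$ together with a first-order correction $\delta_i(k^2)\xi_j$ coming from $\partial_{\xi_i}(\xi_i)\cdot\delta_i(k^2\xi_j)$. Summing over the four choices of $(i,j)$ with the weights $1,\Re(\tau),\Re(\tau),|\tau|^2$ prescribed by the scalar part of \eqref{lap1} produces the degree-two contribution $k^2(\xi_1^2+2\Re(\tau)\xi_1\xi_2+|\tau|^2\xi_2^2)\otimes\mathrm{I}_2$, which is the claimed $a_2$, together with a diagonal first-order piece built from $\delta_1(k^2)\xi_1$, $\delta_2(k^2)\xi_2$ and the two cross terms $\delta_1(k^2)\xi_2,\delta_2(k^2)\xi_1$ weighted by $\Re(\tau)$.

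For the matrix part, applying the same rule to $\delta_1 k^2\delta_2-\delta_2 k^2\delta_1$ makes the leading symbols $k^2\xi_1\xi_2$ cancel, so no degree-two contribution arises; what survives is a purely first-order term $(\delta_1(k^2)\xi_2-\delta_2(k^2)\xi_1)$ tensored with the constant off-diagonal matrix. Collecting all pieces by homogeneity yields $a_2$ and $a_1$ in the form stated, while $a_0=0$ is automatic since \eqref{lap1} contains no zeroth-order differential term. The only real work is bookkeeping: combining the four diagonal first-order contributions from the scalar part with the matrix-weighted off-diagonal contribution and repackaging everything as a single $2\times 2$ matrix of $\Ai$-valued symbols. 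There are no analytic subtleties; the asymptotic expansion \eqref{productsymbol} terminates exactly after two terms for each $\delta_i\, k^2\,\delta_j$ because $\xi_i$ is a polynomial of degree one in $\xi$.
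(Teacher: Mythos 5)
Your proposal is correct and is precisely the computation the paper leaves implicit: the paper offers no proof beyond the remark that the symbol ``can easily be computed using \eqref{lap1}'', and the product-formula calculation $\sigma(\delta_i\, k^2\, \delta_j)=k^2\xi_i\xi_j+\delta_i(k^2)\xi_j$, with the expansion \eqref{productsymbol} terminating after the $\alpha=(0,0)$ and $\alpha=e_i$ terms because $\xi_i$ is linear, is exactly the intended route. One point worth flagging: the $a_1$ you obtain does not literally match the matrix displayed in the lemma, which contains typographical errors (misplaced indices on the $\delta$'s, $|\tau|^2$ where $\Im(\tau)^2$ is meant in the $(1,2)$ entry, a sign/index slip in the $(2,1)$ entry, and $2\Re(\tau)\delta_1(k^2)\xi_2$ on the diagonal where $\Re(\tau)\delta_1(k^2)\xi_2+\Re(\tau)\delta_2(k^2)\xi_1$ should appear); your version is the correct one, being the unique one consistent with the decomposition $a_1=a_1'\otimes{\rm I}_2+a_1''\otimes\sigma$ that the paper uses immediately afterwards.
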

On the other hand, if $\sigma(\lap_\varphi^{(0,1)})=a_2'(\xi)+a_1'(\xi)+a_0'(\xi)$, where $a_j'$ is the homogeneous term of order $j$, then we infer from \eqref{dallap-deRhamlap} that
\begin{eqnarray*}
a_2&=&a'_2 \otimes {\rm I}_2\\
a_1 &=& a'_1 \otimes  {\rm I}_2 +a''_1 \otimes \sigma\\
a_0&=& a'_0\otimes {\rm I}_2=0.
\end{eqnarray*}
Here
$$a_1''(\xi)=\delta_1(k^2) \xi_2-\delta_2(k^2)\xi_1,$$
and using \cite[Lemma 6.1]{Connes-Moscovici2014}, $a_1'(\xi)$ is given by 
$$a_1'(\xi)=\big((\delta_1(k^2)+\tau\delta_2(k^2)\big)(\xi_1+\bar\tau\xi_2).$$

Let us denote the terms of the symbol of the parametrix $(\lap_{h,1}-\lambda)^{-1}$ by $b_n(\xi,\lambda)$, as in \eqref{parametrixsymbol}, i.e.  
$$\sigma\big((\lap_{h,1}-\lambda)^{-1}\big)=b_0(\xi,\lambda)+b_1(\xi,\lambda)+b_2(\xi,\lambda)+\cdots.$$ 
Similarly, we denote the terms of the symbol of the parametrix of $\lap_\varphi^{(0,1)}$ by $b_n'(\xi,\lambda)$, i.e. 
$$\sigma\big((\lap_\varphi^{(0,1)}-\lambda)^{-1}\big)=b_0'(\xi,\lambda)+b_1'(\xi,\lambda)+b_2'(\xi,\lambda)+\cdots\, . $$
Note that both $b_n$ and $b_n'$ are given by \eqref{parametrixsymbols}, using the symbols of $\lap_{h,1}$ and $\lap_\varphi^{(0,1)}$ respectively.

In the following lines, we will show that each term $b_n(\xi,\lambda)$, $n=0,1,2$, is the sum of $b_n'(\xi,\lambda)\otimes {\rm I}_2$ and another term, which will be computed explicitly.
For the first term, we have
$$b_0=b_0'\otimes {\rm I}_2=(k^2|\xi|^2-\lambda)^{-1}\otimes {\rm I}_2.$$
Note that we have used the identity $a'_2\otimes {\rm I}_2=a_2$. Furthermore,
\begin{eqnarray*}
b_1&=&-\left(b_0a_1b_0+\partial_1(b_0)\delta_1(a_2)b_0+\partial_2(b_0)\delta_2(a_2)b_0\right)\\
&=& b_1' \otimes {\rm I}_2-b_0a_1'' b_0\sigma.
\end{eqnarray*}
Finally, the second term $b_2$ is given by 
\begin{eqnarray*}
b_2&=&-\big(b_0a_0b_0+b_1a_1b_0+\partial_1(b_0)\delta_1(a_1)b_0+\partial_2(b_0)\delta_2(a_1)b_0 \\
&& +\partial_1(b_1)\delta_1(a_2)b_0 +\partial_2(b_1)\delta_2(a_2)b_0+\partial_1\partial_2(b_0)\delta_1\delta_2(a_2)b_0\\
&& +\frac{1}{2}\partial_1\partial_1(b_0)\delta^2_1(a_2)b_0+\frac{1}{2}\partial_2\partial_2(b_0)\delta^2_2(a_2)b_0\big)\\
&&\\
&=& b_2'\otimes  {\rm I}_2
-\Big(    
b_1'a_1''b_0
-2i\Im(\tau) b_0a_1'' b_0a_1''b_0
-b_0a_1''b_0a_1'b_0+\partial_1(b_0)\delta_1(a_1'')b_0 \\ && +\partial_2(b_0)\delta_2(a_1'')b_0 -\partial_1(b_0a_1''b_0)\delta_1(a_2)b_0 -\partial_2(b_0a_1''b_0)\delta_2(a_2)b_0\Big)\sigma.
\end{eqnarray*} 
By taking
\begin{eqnarray}\label{b2doubleprime}
\begin{aligned}
b_2''&= b_0a'_1b_0a_1''b_0+
\partial_1(b_0)\delta_1(a_2')b_0a_1''b_0
+\partial_2(b_0)\delta_2(a_2')b_0a_1''b_0
\\
&+2i\Im(\tau) b_0a_1'' b_0a_1''b_0
+b_0a_1''b_0a_1'b_0  
-\partial_1(b_0)\delta_1(a_1'')b_0
-\partial_2(b_0)\delta_2(a_1'')b_0
\\
&+\partial_1(b_0a_1''b_0)\delta_1(a_2)b_0 +\partial_2(b_0a_1''b_0)\delta_2(a_2)b_0,
\end{aligned}
\end{eqnarray}
and using the identity $\sigma^2=2i\Im(\tau)\sigma$, we then have
\begin{eqnarray*}
b_2&=&b_2'\otimes  {\rm I}_2
+ b_2'' \sigma.
\end{eqnarray*}
\begin{lemma} 
The Ricci density for the modular de Rham spectral triple with dilaton $h$ is given by
\begin{eqnarray*}
\ricden= \Im(\tau) \big(\frac{1}{4\pi^2}R^\gamma\otimes {\rm I}_2 - \int_{\mathbb{R}^2}b_2''(\xi,-1) d\xi \otimes \sigma\big) e^h.
\end{eqnarray*}
\end{lemma}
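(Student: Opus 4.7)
The proof will essentially be a bookkeeping consequence of three pieces of data already assembled in the excerpt: the representation of the Ricci density from Lemma \ref{Riccidensitylemma}, the local formula \eqref{locala2} for the heat coefficient $c_2$, and the decomposition $b_2 = b_2' \otimes \mathrm{I}_2 + b_2'' \sigma$ derived from the perturbative relation \eqref{dallap-deRhamlap} between $\lap_{h,1}$ and $\lap_\varphi^{(0,1)} \otimes \mathrm{I}_2$. The starting point is equation \eqref{ricdenexists}, which reads
\begin{equation*}
\ricden = \Im(\tau)\bigl(c_2(\lap_{h,0})\otimes \mathrm{I}_2 - c_2(\lap_{h,1})\bigr) e^{h}.
\end{equation*}

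First, I would apply \eqref{locala2} to $P = \lap_{h,1}$ and substitute the decomposition $b_2(\xi,\lambda) = b_2'(\xi,\lambda) \otimes \mathrm{I}_2 + b_2''(\xi,\lambda)\sigma$ established just before the lemma statement (this is where the term $b_2''$ from \eqref{b2doubleprime} and the identity $\sigma^2 = 2i\Im(\tau)\sigma$ enter). Because $\sigma$ is a constant matrix independent of $\xi$ and $\lambda$, the integral splits as
\begin{equation*}
c_2(\lap_{h,1}) \;=\; c_2(\lap_\varphi^{(0,1)})\otimes \mathrm{I}_2 \;+\; \Bigl(\int_{\mathbb{R}^2} b_2''(\xi,-1)\,d\xi\Bigr)\otimes \sigma,
\end{equation*}
the first summand being obtained by applying \eqref{locala2} to $\lap_\varphi^{(0,1)}$.

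Plugging this back into the formula for $\ricden$ gives
\begin{equation*}
\ricden \;=\; \Im(\tau)\Bigl(\bigl(c_2(\lap_{h,0}) - c_2(\lap_\varphi^{(0,1)})\bigr)\otimes \mathrm{I}_2 \;-\; \Bigl(\int_{\mathbb{R}^2}b_2''(\xi,-1)\,d\xi\Bigr)\otimes \sigma\Bigr) e^{h}.
\end{equation*}
The scalar part $c_2(\lap_{h,0}) - c_2(\lap_\varphi^{(0,1)})$ is exactly the quantity identified with the graded scalar curvature in \eqref{scalar}; tracking the normalization factor $\tfrac{1}{4\pi^2}$ which arises from the convention $d\xi = (2\pi)^{-2} d_L\xi$ built into \eqref{pseudooperator}, it yields $c_2(\lap_{h,0}) - c_2(\lap_\varphi^{(0,1)}) = \tfrac{1}{4\pi^2} R^\gamma e^{-h}$, and cancellation of the $e^{-h}e^h$ with the right-multiplication by $e^h$ at the end delivers the claimed formula.

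The only real subtlety in this step is making sure the normalization is handled consistently: both the definition of $c_2$ in \eqref{localformofterms} and the $R^\gamma$ formula \eqref{scalar} imported from \cite{Connes-Moscovici2014, Fathizadeh-Khalkhali2013} must be referred to the same measure convention, so the factor $(2\pi)^{-2}$ appears in exactly the right place. This is the main (indeed only) place in the proof where one could slip. The genuinely hard computation, namely evaluating the integral $\int_{\mathbb{R}^2} b_2''(\xi,-1)\,d\xi$ in closed form so as to recover the function $S(\nabla_1,\nabla_2)$ applied to $[\delta_1(\log k), \delta_2(\log k)]$, is precisely what the remainder of Section \ref{computation} is devoted to and is not part of the present lemma.
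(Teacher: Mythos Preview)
Your proposal is correct and follows essentially the same route as the paper: start from \eqref{ricdenexists}, split $c_2(\lap_{h,1})$ via the decomposition $b_2 = b_2'\otimes\mathrm{I}_2 + b_2''\,\sigma$, and invoke the computation from \cite{Connes-Moscovici2014,Fathizadeh-Khalkhali2013} for the diagonal part $c_2(\lap_{h,0})-c_2(\lap_\varphi^{(0,1)})$. The only difference is cosmetic: the paper records the cited result directly as $\int_{\mathbb{R}^2}\bigl(b_2^{k\lap_0 k}(\xi,-1)-b_2'(\xi,-1)\bigr)\,d\xi = \tfrac{1}{4\pi^2}R^\gamma$, leaving the overall $e^h$ as a right factor on the outside, rather than introducing and then cancelling an intermediate $e^{-h}$ as you do---your caution about keeping the normalizations consistent is exactly the place to be careful here.
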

\begin{proof}
We have already seen in \eqref{ricdenexists} that the Ricci density can be described as
\begin{equation*}
\ricden =\Im(\tau) \big(c_2(\lap_{h,0})\otimes {\rm I}_2-c_2(\lap_{h,1})\big) e^h.
\end{equation*}
By \eqref{localformofterms}, we then have
\begin{eqnarray*}
c_2(\lap_{h,0})\otimes {\rm I}_2-c_2(\lap_{h,1}) &=& \frac{1}{2\pi i}\int_{\mathbb{R}^2}\int_\gamma e^{-\lambda}  b_2^{\lap_{h,0}}(\xi,\lambda) \otimes {\rm I}_2  - b_2(\xi,\lambda) \, d\lambda d\xi \\
&=& \frac{1}{2\pi i}\int_{\mathbb{R}^2}\int_\gamma e^{-\lambda}  \Big(b_2^{k\lap_0 k}(\xi,\lambda)   - b'_2(\xi,\lambda)\Big)\otimes {\rm I}_2- b_2''\otimes \sigma\, d\lambda d\xi\\
&=& \int_{\mathbb{R}^2}  \big(b_2^{k\lap_0 k}(\xi,-1)   - b'_2(\xi,-1)\big)\otimes {\rm I}_2- b_2''(\xi,-1)\otimes \sigma  d\xi.
\end{eqnarray*} 
The last identity was deduced from the same homogeneity argument used to prove \eqref{locala2}. On the other hand,
the term $\int_{\mathbb{R}^2}  b_2^{k\lap_0 k}(\xi,-1)   - b'_2(\xi,-1)d\xi$ was computed in \cite{Connes-Moscovici2014,Fathizadeh-Khalkhali2013}, where it was shown to be equal to $\frac{1}{4\pi^2}R^\gamma$. 
Therefore we have   
\begin{equation}
c_2(\lap_{h,0})\otimes {\rm I}_2-c_2(\lap_{h,1}) = 
\frac{1}{4\pi^2}R^\gamma\otimes {\rm I}_2 -\int_{\mathbb{R}^2} b_2''(\xi,-1)\otimes \sigma \, d\xi.
\end{equation} 
This completes the proof of the lemma.
\end{proof}
In the rest of this section, we will carry out the computation needed to obtain a formula for $\int_{\mathbb{R}^2}b_2''(\xi,-1)d\xi$. For this purpose, by replacing all the terms in \eqref{b2doubleprime}, one can obtain a formula for $b_2''(\xi,\lambda)$ in terms of $b_0$ as follows:
\allowdisplaybreaks
\begin{eqnarray*}
b_2''(\xi,\lambda)&=& 2 \xi_1 \xi_2 k^2  b_0^2  \delta_1\left( \delta_1(k^2)\right)  b_0+2 \xi_2^2 \tau_1 k^2  b_0^2  \delta_1\left( \delta_1(k^2)\right)  b_0\\
&-&2 \xi_1^2 k^2  b_0^2  \delta_1\left( \delta_2(k^2)\right)  b_0-2 \xi_1 \xi_2 \tau_1 k^2  b_0^2  \delta_1\left( \delta_2(k^2)\right)  b_0\\
&+&2 \xi_1 \xi_2 \tau_1 k^2  b_0^2  \delta_2\left( \delta_1(k^2)\right)  b_0+2 \xi_2^2 \tau_1^2 k^2  b_0^2  \delta_2\left( \delta_1(k^2)\right)  b_0\\
&+&2 \xi_2^2 \tau_2^2 k^2  b_0^2  \delta_2\left( \delta_1(k^2)\right)  b_0-2 \xi_1^2 \tau_1 k^2  b_0^2  \delta_2\left( \delta_2(k^2)\right)  b_0\\
&-&2 \xi_1 \xi_2 \tau_1^2 k^2  b_0^2  \delta_2\left( \delta_2(k^2)\right)  b_0-2 \xi_1 \xi_2 \tau_2^2 k^2  b_0^2  \delta_2\left( \delta_2(k^2)\right)  b_0\\
&+&2 \xi_1 \xi_2  b_0 \delta_1(k^2)  b_0 \delta_1(k^2)  b_0+2 \xi_2^2 \tau_1  b_0 \delta_1(k^2)  b_0 \delta_1(k^2)  b_0\\
&+&2 \xi_1 \xi_2 \tau_1  b_0 \delta_1(k^2)  b_0 \delta_2(k^2)  b_0+2 \xi_2^2 \tau_1^2  b_0 \delta_1(k^2)  b_0 \delta_2(k^2)  b_0\\
&+&2 \xi_2^2 \tau_2^2  b_0 \delta_1(k^2)  b_0 \delta_2(k^2)  b_0-2 \xi_1^2  b_0 \delta_2(k^2)  b_0 \delta_1(k^2)  b_0\\
&-&2 \xi_1 \xi_2 \tau_1  b_0 \delta_2(k^2)  b_0 \delta_1(k^2)  b_0-2 \xi_1^2 \tau_1  b_0 \delta_2(k^2)  b_0 \delta_2(k^2)  b_0\\
&-&2 \xi_1 \xi_2\tau_1^2  b_0 \delta_2(k^2)  b_0 \delta_2(k^2)  b_0-2 \xi_1 \xi_2\tau_2^2  b_0 \delta_2(k^2)  b_0 \delta_2(k^2)  b_0\\
&-&2 \xi_1^3 \xi_2  b_0 \delta_1(k^2) k^2  b_0^2 \delta_1(k^2)  b_0-6 \xi_1^2 \xi_2^2 \tau_1  b_0 \delta_1(k^2) k^2  b_0^2 \delta_1(k^2)  b_0\\
&-&6 \xi_1 \xi_2^3 \tau_1^2  b_0 \delta_1(k^2) k^2  b_0^2 \delta_1(k^2)  b_0-2 \xi_2^4 \tau_1^3  b_0 \delta_1(k^2) k^2  b_0^2 \delta_1(k^2)  b_0\\
&-&2 \xi_1 \xi_2^3 \tau_2^2  b_0 \delta_1(k^2) k^2  b_0^2 \delta_1(k^2)  b_0-2 \xi_2^4 \tau_1 \tau_2^2  b_0 \delta_1(k^2) k^2  b_0^2 \delta_1(k^2)  b_0\\
&-&2 \xi_1^3 \xi_2 \tau_1  b_0 \delta_1(k^2) k^2  b_0^2 \delta_2(k^2)  b_0-6 \xi_1^2 \xi_2^2 \tau_1^2  b_0 \delta_1(k^2) k^2  b_0^2 \delta_2(k^2)  b_0\\
&-&6 \xi_1 \xi_2^3 \tau_1^3  b_0 \delta_1(k^2) k^2  b_0^2 \delta_2(k^2)  b_0-2 \xi_2^4 \tau_1^4  b_0 \delta_1(k^2) k^2  b_0^2 \delta_2(k^2)  b_0\\
&-&2 \xi_1^2 \xi_2^2 \tau_2^2  b_0 \delta_1(k^2) k^2  b_0^2 \delta_2(k^2)  b_0-6 \xi_1 \xi_2^3 \tau_1 \tau_2^2  b_0 \delta_1(k^2) k^2  b_0^2 \delta_2(k^2)  b_0\\
&-&4 \xi_2^4 \tau_1^2 \tau_2^2  b_0 \delta_1(k^2) k^2  b_0^2 \delta_2(k^2)  b_0-2 \xi_2^4 \tau_2^4  b_0 \delta_1(k^2) k^2  b_0^2 \delta_2(k^2)  b_0\\
&+&2 \xi_1^4  b_0 \delta_2(k^2) k^2  b_0^2 \delta_1(k^2)  b_0+6 \xi_1^3 \xi_2 \tau_1  b_0 \delta_2(k^2) k^2  b_0^2 \delta_1(k^2)  b_0\\
&+& 6 \xi_1^2 \xi_2^2 \tau_1^2  b_0 \delta_2(k^2) k^2  b_0^2 \delta_1(k^2)  b_0+2 \xi_1 \xi_2^3 \tau_1^3  b_0 \delta_2(k^2) k^2  b_0^2 \delta_1(k^2)  b_0\\
&+& 2 \xi_1^2 \xi_2^2 \tau_2^2  b_0 \delta_2(k^2) k^2  b_0^2 \delta_1(k^2)  b_0+2 \xi_1 \xi_2^3 \tau_1 \tau_2^2  b_0 \delta_2(k^2) k^2  b_0^2 \delta_1(k^2)  b_0\\
&+&2 \xi_1^4\tau_1  b_0 \delta_2(k^2) k^2  b_0^2 \delta_2(k^2)  b_0+6 \xi_1^3 \xi_2\tau_1^2  b_0 \delta_2(k^2) k^2  b_0^2 \delta_2(k^2)  b_0\\
&+& 6 \xi_1^2 \xi_2^2\tau_1^3  b_0 \delta_2(k^2) k^2  b_0^2 \delta_2(k^2)  b_0+2 \xi_1 \xi_2^3\tau_1^4  b_0 \delta_2(k^2) k^2  b_0^2 \delta_2(k^2)  b_0\\
&+&2 \xi_1^3 \xi_2\tau_2^2  b_0 \delta_2(k^2) k^2  b_0^2 \delta_2(k^2)  b_0+6 \xi_1^2 \xi_2^2\tau_1\tau_2^2  b_0 \delta_2(k^2) k^2  b_0^2 \delta_2(k^2)  b_0\\
&+& 4 \xi_1 \xi_2^3\tau_1^2\tau_2^2  b_0 \delta_2(k^2) k^2  b_0^2 \delta_2(k^2)  b_0+2 \xi_1 \xi_2^3\tau_2^4  b_0 \delta_2(k^2) k^2  b_0^2 \delta_2(k^2)  b_0\\
&-&4 \xi_1^3 \xi_2 k^2  b_0^2 \delta_1(k^2)  b_0 \delta_1(k^2)  b_0-12 \xi_1^2 \xi_2^2\tau_1 k^2  b_0^2 \delta_1(k^2)  b_0 \delta_1(k^2)  b_0\\
&-&12 \xi_1 \xi_2^3 \tau_1^2 k^2  b_0^2 \delta_1(k^2)  b_0 \delta_1(k^2)  b_0-4 \xi_2^4 \tau_1^3 k^2  b_0^2 \delta_1(k^2)  b_0 \delta_1(k^2)  b_0\\
&-&4 \xi_1 \xi_2^3 \tau_2^2 k^2  b_0^2 \delta_1(k^2)  b_0 \delta_1(k^2)  b_0-4 \xi_2^4 \tau_1 \tau_2^2 k^2  b_0^2 \delta_1(k^2)  b_0 \delta_1(k^2)  b_0\\
&+& 2 \xi_1^4 k^2  b_0^2 \delta_1(k^2)  b_0 \delta_2(k^2)  b_0+4 \xi_1^3 \xi_2 \tau_1 k^2  b_0^2 \delta_1(k^2)  b_0 \delta_2(k^2)  b_0\\
&-&4 \xi_1 \xi_2^3 \tau_1^3 k^2  b_0^2 \delta_1(k^2)  b_0 \delta_2(k^2)  b_0-2 \xi_2^4 \tau_1^4 k^2  b_0^2 \delta_1(k^2)  b_0 \delta_2(k^2)  b_0\\
&-&4 \xi_1 \xi_2^3 \tau_1 \tau_2^2 k^2  b_0^2 \delta_1(k^2)  b_0 \delta_2(k^2)  b_0-4 \xi_2^4 \tau_1^2 \tau_2^2 k^2  b_0^2 \delta_1(k^2)  b_0 \delta_2(k^2)  b_0\\
&+& 2 \xi_1^4 k^2  b_0^2 \delta_2(k^2)  b_0 \delta_1(k^2)  b_0-2 \xi_2^4 \tau_2^4 k^2  b_0^2 \delta_1(k^2)  b_0 \delta_2(k^2)  b_0\\
&+& 4 \xi_1^3 \xi_2 \tau_1 k^2  b_0^2 \delta_2(k^2)  b_0 \delta_1(k^2)  b_0-4 \xi_1 \xi_2^3 \tau_1^3 k^2  b_0^2 \delta_2(k^2)  b_0 \delta_1(k^2)  b_0\\
&-& 4 \xi_1 \xi_2^3 \tau_1 \tau_2^2 k^2  b_0^2 \delta_2(k^2)  b_0 \delta_1(k^2)  b_0-2 \xi_2^4 \tau_1^4 k^2  b_0^2 \delta_2(k^2)  b_0 \delta_1(k^2)  b_0\\
&-&4 \xi_2^4 \tau_1^2 \tau_2^2 k^2  b_0^2 \delta_2(k^2)  b_0 \delta_1(k^2)  b_0-2 \xi_2^4 \tau_2^4 k^2  b_0^2 \delta_2(k^2)  b_0 \delta_1(k^2)  b_0\\
&+& 4 \xi_1^4 \tau_1 k^2  b_0^2 \delta_2(k^2)  b_0 \delta_2(k^2)  b_0+12 \xi_1^3 \xi_2 \tau_1^2 k^2  b_0^2 \delta_2(k^2)  b_0 \delta_2(k^2)  b_0\\
&+& 12 \xi_1^2 \xi_2^2 \tau_1^3 k^2  b_0^2 \delta_2(k^2)  b_0 \delta_2(k^2)  b_0+4 \xi_1 \xi_2^3 \tau_1^4 k^2  b_0^2 \delta_2(k^2)  b_0 \delta_2(k^2)  b_0\\
&+& 4 \xi_1^3 \xi_2 \tau_2^2 k^2  b_0^2 \delta_2(k^2)  b_0 \delta_2(k^2)  b_0+12 \xi_1^2 \xi_2^2 \tau_1 \tau_2^2 k^2  b_0^2 \delta_2(k^2)  b_0 \delta_2(k^2)  b_0\\
&+& 8 \xi_1 \xi_2^3 \tau_1^2 \tau_2^2 k^2  b_0^2 \delta_2(k^2)  b_0 \delta_2(k^2)  b_0+4 \xi_1 \xi_2^3 \tau_2^4 k^2  b_0^2 \delta_2(k^2)  b_0 \delta_2(k^2)  b_0\,.
\end{eqnarray*}
\allowdisplaybreaks[0]
In the above formula $\tau_1=\Re(\tau)$ and $\tau_2=\Im(\tau)$. 
We  then change the variables $(\xi_1,\xi_2)$ to the new variables $(r,\theta)$  via the transformation 
$$\xi_1=r \cos(\theta )-\frac{r \Re(\tau)}{\Im(\tau)} \sin (\theta),\qquad \xi_2=\frac{r}{\Im(\tau)} \sin (\theta ).$$
This change of variable will change the volume form as follows:
$$d\xi=(2\pi)^{-2}d\xi_1d\xi_2=\frac{1}{(2\pi)^2\Im(\tau)} rdrd\theta.$$
 The integral $b_2''(\xi,-1)$ with respect to $\theta\in [0,2\pi]$ is equal to  
 \begin{eqnarray*}
 \frac{r}{(2\pi)^2 \Im(\tau)}\int_0^{2\pi}b_2''(\xi,-1)d\theta &=&
 \frac{ r}{2\pi \tau_2}\Big(  
 r^2b_0\delta_1(k^2)b_0\delta_2(k^2)b_0
 - r^2b_0\delta_2(k^2)b_0\delta_1(k^2)\\
&&\quad -r^4b_0\delta_1(k^2)k^2b_0^2\delta_2(k^2)b_0
 +r^4b_0\delta_2(k^2)k^2b_0^2\delta_1(k^2)b_0  \Big).
 \end{eqnarray*}
  
  We first write these expressions in terms of $h/2=\log k$.
To do so, we will use the following identities from \cite{Connes-Moscovici2014}:
\begin{equation}
\delta_j(k)=kf(\Delta)(\delta_j(\log(k))),\quad f(u)=\frac{2(-1+\sqrt{u})}{\log(u)},
\end{equation}
and also  $\Delta(ka)=k\Delta(a)$ and $ak^n=k^n\Delta^{\frac{n}{2}}(a)$. 
Then we have,
\begin{eqnarray*}
\delta_j(k^2)&=& k\delta_j(k)+\delta_j(k)k\\
&=& k\delta_j(k)+k\Delta^{\frac12}\big(\delta_j(k)\big)\\
&=& k(1+\Delta^{\frac12})\big(\delta_j(k)\big)\\
&=& k(1+\Delta^{\frac12})\big(kf(\Delta)(\delta_j(\log(k))\big)\\
&=& k^2(1+\Delta^{\frac12})\big(f(\Delta)(\delta_j(\log(k))\big)\\
&=& k^2g(\Delta)\big(\delta_j(\log(k)\big).
\end{eqnarray*}
where $g(u)=\frac{2(u-1)}{\log(u)}.$
We will also use the notation $g_j(u)=u^jg(u)$. \\
There are only two different types of terms to be integrated with respect to the radial variable $r$. To perform the integration, we will use the rearrangement lemma \cite[Lemma 6.2]{Connes-Moscovici2014}, which gives us
\begin{eqnarray*}
&&\int_0^\infty r^3 b_0\delta_j(k^2)b_0\delta_{j'}(k^2)b_0dr \\
&=& \int_0^\infty r^3 b_0\delta_j(k^2)b_0k^2g(\Delta)\big(\delta_{j'}(\log(k))\big)b_0dr\\
&=& \int_0^\infty r^3 b_0k^2 
g(\Delta)\big(\delta_j(\log(k))\big) k^2
b_0
g(\Delta)\big(\delta_{j'}(\log(k))\big) b_0dr\\
&=& \int_0^\infty r^3 b_0k^4
g_1(\Delta)\big(\delta_j(\log(k))\big) b_0 
g(\Delta)\big(\delta_{j'}(\log(k))\big) b_0dr\\
&=& \frac{1}{2}F_{1,1,1}(\Delta_1,\Delta_2)\Big(g_1(\Delta)\big(\delta_{j}(\log(k))\big)\, g(\Delta)\big(\delta_{j'}(\log(k))\big)\Big).
\end{eqnarray*}
Similarly,
\begin{eqnarray*}
&&\int_0^\infty r^5b_0\delta_j(k^2)k^2b_0^2\delta_{j'}(k^2)b_0dr\\
&=& \int_0^\infty r^5b_0 k^2g(\Delta)\big(\delta_{j}(\log(k))\big) k^2 b_0^2k^2 g(\Delta)\big(\delta_{j'}(\log(k))\big)b_0dr\\
&=& \int_0^\infty r^5k^6 b_0 \Delta^2 g(\Delta)\big(\delta_{j}(\log(k))\big) b_0^2g(\Delta)(\delta_{j'}(\log(k))\big)b_0dr\\
 &=& \frac12 F_{1,2,1}(\Delta_1,\Delta_2)\Big(
 g_2(\Delta)\big(\delta_j(\log(k))\big)
 g(\Delta)\big( \delta_{j'}(\log(k))\big)\Big).
\end{eqnarray*}
  
Note that 
  \begin{eqnarray*}
&&F_{111}(e^s,e^t)g_1(e^s)g(e^t)-F_{121}(e^s,e^t)g_2(e^s)g(e^t)\\
&=& \frac{(s+t-t\, \cosh(s)-s\, \cosh(t)-\sinh(s)-\sinh(t)+\sinh(s+t))}{s\, t\left(\sinh\left(\frac{s}{2}\right) \sinh\left(\frac{t}{2}\right) \sinh\left(\frac{s+t}{2}\right)\right)},
\end{eqnarray*}
which is equal to the function $S$ given in  \eqref{Sfunction}.
Hence
\begin{equation*}
\int_0^\infty \int_0^{2\pi} b_2''(\xi,-1) \frac{1}{(2\pi)^2 \tau_2} rdr d\theta =\frac{1}{4\pi \tau_2} S(\nabla_1,\nabla_2)\Big([\delta_1(\log k),\delta_2(\log k)]\Big).
\end{equation*}

 To compute the Ricci operator one could  directly compute the second term $a_2$ in the asymptotic expansion of the heat kernel of the de Rham Laplacian    using pseudodifferential calculus. This would involve substantial amount of computer aided symbolic calculations. Instead we have observed that our Laplcian is a first order perturbation of the Laplacian in  \cite{Connes-Moscovici2014, Fathizadeh-Khalkhali2013} (cf. formula  \eqref{dallap-deRhamlap}). 
 With this observation, our calculations were reduced to those  carried in this section.

\addcontentsline{toc}{section}{References}
\def\polhk#1{\setbox0=\hbox{#1}{\ooalign{\hidewidth
  \lower1.5ex\hbox{`}\hidewidth\crcr\unhbox0}}}
\providecommand{\bysame}{\leavevmode\hbox to3em{\hrulefill}\thinspace}
\providecommand{\MR}{\relax\ifhmode\unskip\space\fi MR }
\providecommand{\MRhref}[2]{%
  \href{http://www.ams.org/mathscinet-getitem?mr=#1}{#2}
}
\providecommand{\href}[2]{#2}

\Addresses

\begin{thebibliography}{10}



\bibitem{Tanvir-Marcolli2012}
Tanvir~Ahamed Bhuyain and Matilde Marcolli, \emph{The {R}icci flow on
  noncommutative two-tori}, Lett. Math. Phys. \textbf{101} (2012), no.~2,
  173--194. \MR{2947960}

\bibitem{Connes-Cohen1992}
Paula~B. Cohen and Alain Connes, \emph{Conformal geomtry of the irrational
  roation algebra}, Priprint MPI/92-93.

\bibitem{Connes1980}
Alain Connes, \emph{{$C^{\ast} $} alg\`ebres et g\'eom\'etrie
  diff\'erentielle}, C. R. Acad. Sci. Paris S\'er. A-B \textbf{290} (1980),
  no.~13, A599--A604. \MR{572645}

\bibitem{Connes1994}
\bysame, \emph{Noncommutative geometry}, Academic Press, Inc., San Diego, CA,
  1994. \MR{1303779}

\bibitem{Connes1995-reality}
\bysame, \emph{Noncommutative geometry and reality}, J. Math. Phys. \textbf{36}
  (1995), no.~11, 6194--6231. \MR{1355905}

\bibitem{Connes-Fathizadeh2016}
Alain {Connes} and Farzad {Fathizadeh}, \emph{{The term a\_4 in the heat kernel
  expansion of noncommutative tori}}, arXiv:1611.09815 [math.QA] (2016).

\bibitem{Connes-Marcolli2008}
Alain Connes and Matilde Marcolli, \emph{Noncommutative geometry, quantum
  fields and motives}, American Mathematical Society Colloquium Publications,
  vol.~55, American Mathematical Society, Providence, RI; Hindustan Book
  Agency, New Delhi, 2008. \MR{2371808}

\bibitem{Connes-Moscovici2008}
Alain Connes and Henri Moscovici, \emph{Type {III} and spectral triples},
  Traces in number theory, geometry and quantum fields, Aspects Math., E38,
  Friedr. Vieweg, Wiesbaden, 2008, pp.~57--71. \MR{2427588}

\bibitem{Connes-Moscovici2014}
\bysame, \emph{Modular curvature for noncommutative two-tori}, J. Amer. Math.
  Soc. \textbf{27} (2014), no.~3, 639--684. \MR{3194491}

\bibitem{Connes-Tretkoff2011}
Alain Connes and Paula Tretkoff, \emph{The {G}auss-{B}onnet theorem for the
  noncommutative two torus}, Noncommutative geometry, arithmetic, and related
  topics, Johns Hopkins Univ. Press, Baltimore, MD, 2011, pp.~141--158.
  \MR{2907006}

\bibitem{Dabrowski-Sitarz2015}
Ludwik D{\c{a}}browski and Andrzej Sitarz, \emph{An asymmetric noncommutative
  torus}, SIGMA Symmetry Integrability Geom. Methods Appl. \textbf{11} (2015),
  Paper 075, 11. \MR{3402793}

\bibitem{Fathi-Khalkhali2015}
A.~{Fathi} and M.~{Khalkhali}, \emph{{On Certain Spectral Invariants of Dirac
  Operators on Noncommutative Tori}}, arXiv:1504.01174 [math.QA] (2015).

\bibitem{Fathi-Ghorbanpour-Khalkhali2016}
Ali Fathi, Asghar Ghorbanpour, and Masoud Khalkhali, \emph{{The Curvature of
  the Determinant Line Bundle on the Noncommutative Two Torus}}, To appear in
  Journal of Math. Phys. Anal. Geom., arXiv:1410.0475 [math.QA] (2014).

\bibitem{Fathizadeh2015}
Farzad Fathizadeh, \emph{On the scalar curvature for the noncommutative four
  torus}, J. Math. Phys. \textbf{56} (2015), no.~6, 062303, 14. \MR{3369894}

\bibitem{Fathizadeh-Khalkhali2012}
Farzad Fathizadeh and Masoud Khalkhali, \emph{The {G}auss-{B}onnet theorem for
  noncommutative two tori with a general conformal structure}, J. Noncommut.
  Geom. \textbf{6} (2012), no.~3, 457--480. \MR{2956317}

\bibitem{Fathizadeh-Khalkhali2013}
\bysame, \emph{Scalar curvature for the noncommutative two torus}, J.
  Noncommut. Geom. \textbf{7} (2013), no.~4, 1145--1183. \MR{3148618}

\bibitem{Fathizadeh-Khalkhli2013-2}
\bysame, \emph{Weyl's law and {C}onnes' trace theorem for noncommutative two
  tori}, Lett. Math. Phys. \textbf{103} (2013), no.~1, 1--18. \MR{3004814}

\bibitem{Fathizadeh-Khalkhali2015}
\bysame, \emph{Scalar curvature for noncommutative four-tori}, J. Noncommut.
  Geom. \textbf{9} (2015), no.~2, 473--503. \MR{3359018}

\bibitem{Gilkey1995}
Peter~B. Gilkey, \emph{Invariance theory, the heat equation, and the
  {A}tiyah-{S}inger index theorem}, second ed., Studies in Advanced
  Mathematics, CRC Press, Boca Raton, FL, 1995. \MR{1396308}

\bibitem{Gilkey2004}
\bysame, \emph{Asymptotic formulae in spectral geometry}, Studies in Advanced
  Mathematics, Chapman \& Hall/CRC, Boca Raton, FL, 2004. \MR{2040963}

\bibitem{Khalkhali-Motadelro-Sadeghi2106}
M.~{Khalkhali}, A.~{Moatadelro}, and S.~{Sadeghi}, \emph{{A Scalar Curvature
  Formula For the Noncommutative 3-Torus}}, arXiv:1610.04740 [math.OA] (2016).

\bibitem{Moatadelro-Khalkhali2014}
Masoud Khalkhali and Ali Moatadelro, \emph{A {R}iemann-{R}och theorem for the
  noncommutative two torus}, J. Geom. Phys. \textbf{86} (2014), 19--30.
  \MR{3282309}

\bibitem{Lesch-Moscovici2016}
Matthias Lesch and Henri Moscovici, \emph{Modular {C}urvature and {M}orita
  {E}quivalence}, Geom. Funct. Anal. \textbf{26} (2016), no.~3, 818--873.
  \MR{3540454}


\bibitem{Liu2015}
Yang {Liu}, \emph{Modular curvature for toric noncommutative manifolds},
  arXiv:1510.04668 [math.QA] (2015).




\end{thebibliography}
\end{document}